\title[Definable choice for a class of weakly o-minimal theories]{Definable choice for a class of weakly o-minimal theories}
\author[Michael~C.\ Laskowski \& Christopher S.\ Shaw]{Michael~C.\ Laskowski \& Christopher S.\ Shaw}
\thanks{A portion of this material was submitted in partial fulfillment of the second author's PhD thesis \cite{Sha} at the University of Maryland under the advisement of the first author. During this time both authors received support from NSF grant DMS-0600217.
Laskowski has also been partially supported by NSF grant DMS-1308546.}
\keywords{model theory, Skolem functions, definable choice, o-minimal, weakly o-minimal}
\newtheorem{thm}{Theorem}[section]
\newtheorem{cor}[thm]{Corollary}
\newtheorem{lem}[thm]{Lemma}
\newtheorem{prop}[thm]{Proposition}
\newtheorem{claim}[thm]{Claim}
\theoremstyle{definition}
\newtheorem{defn}[thm]{Definition}
\newtheorem{ex}[thm]{Example}
\newcommand{\M}{{\mathcal M}}
\newcommand{\N}{{\mathcal N}}
\newcommand{\C}{{\mathfrak C}}
\newcommand{\G}{{\mathcal G}}
\newcommand{\ds}{\displaystyle}
\newcommand{\precneq}[1][]{%
{\small \;
  \mathrel{
    \mathop{
      \vcenter{
        \hbox{\oalign{\noalign{\kern-.3ex}\hfil$\prec$\hfil\cr
              \noalign{\kern-.7ex}
              $\neq$\cr\noalign{\kern-.3ex}}}
      }
    }\displaylimits_{#1}
  }
}
\;}
\def\Ind{\setbox0=\hbox{$x$}\kern\wd0\hbox to 0pt{\hss$\mid$\hss} \lower.9\ht0\hbox to 0pt{\hss$\smile$\hss}\kern\wd0}
\def\Notind{\setbox0=\hbox{$x$}\kern\wd0\hbox to 0pt{\mathchardef \nn=12854\hss$\nn$\kern1.4\wd0\hss}\hbox to 0pt{\hss$\mid$\hss}\lower.9\ht0 \hbox to 0pt{\hss$\smile$\hss}\kern\wd0}
\begin{document}

\large

\begin{abstract}
Given an o-minimal structure ${\mathcal M}$ with a group operation, we show that for a properly convex subset $U$, 
the theory of the expanded structure ${\mathcal M}'=({\mathcal M},U)$ has definable Skolem functions precisely when ${\mathcal M}'$ is valuational. As a corollary, we get an elementary proof that the theory of any such ${\mathcal M}'$ does not satisfy definable choice.
\end{abstract}

\maketitle

\section{Introduction}
An \textit{o-minimal structure} is an ordered structure in which all subsets definable with parameters consist of a finite collection of points and open intervals. There is a large body of work in support of the conclusion that o-minimal structures form the tamest possible class of ordered models. Much of this background is developed in the series of papers by J.~Knight, A.~Pillay, and C.~Steinhorn (see \cite{KniPilSte}, \cite{PilSte1}, \cite{PilSte2}); a comprehensive treatment of the subject is also found in the survey text \cite{Van1}. 
Throughout this paper, we will be considering expansions of o-minimal groups.  It is well known that such a group must be both abelian and divisible  (see, e.g., \cite{PilSte1}), hence the underlying order is necessarily dense without endpoints.

More generally, \textit{weakly o-minimal} refers to the class of ordered structures in which each definable subset of the model is a finite union of points and  convex sets, the important distinction being that a 1-dimensional convex set $S$ may be contained in a linear order $\M$ in which the supremum or infimum may not be elements of $M\cup\{\pm\infty\}$. 
The most straightforward way of obtaining a weakly o-minimal structure is to begin with an o-minimal group $(\M,+,<,\dots)$ and let $C\subseteq M$ be any subset which is \textit{downward closed} (for any $c\in C$, if $d<c$ and $d\in\M$, then $d\in C$).  In \cite{BaiPoi}  Y.\ Baisalov and B.\ Poizat prove that the expansion $\M'=(\M,U)$ formed by adding a predicate $U$ whose interpretation is $C$ is weakly o-minimal, and
in fact has a weakly o-minimal theory (i.e., every $\N'$ elementarily equivalent to $\M'$ is also weakly o-minimal).  The motivation for such expansions arose from the work of 
G.\ Cherlin and M.\ Dickmann~\cite{CheDic1}, who obtained a quantifier elimination for the theory RCVF of real closed valued fields.

In addition to the work of \cite{BaiPoi}, D.\ Macpherson, D.\  Marker, and C.\ Steinhorn \cite{MacMarSte} studied the class of weakly o-minimal structures that arose by adding a convex predicate to an o-minimal field.  There, they noted that the analysis of the resulting theory varied greatly depending on whether the cut was \textit{valuational} or \textit{nonvaluational}.
Here, we show that the divide still appears in the more general setting where we expand a group instead of a field.  We exhibit this distinction by considering
the question of whether the resulting theory has definable Skolem functions or definable choice.

\begin{defn}\label{withparam}
A theory $T$  with language $L$ has \textit{definable Skolem functions} if, for every model $\M$ of $T$ and for any $L$-formula $\varphi(\bar{x},y)$ with  lg$(\bar{x})=n$, there is a definable (possibly with parameters) function $F_\varphi: \M^n\longrightarrow \M$, such that  for every $\bar{b}$ from $\M$, if $\M\models \exists y\varphi(\bar{b},y)$, then 
$\M\models\varphi(\bar{b},F_\varphi(\bar{b}))$.
\end{defn}

\begin{defn}\label{defch}A theory $T$ has \textit{definable choice} if, for any $\M\models T$ and for any formula $\varphi(\bar{x},y)$, there is a definable unary function $F$ such that: \begin{enumerate}
\item If $\bar{a}\in M$ and $\M\models\exists y\varphi(\bar{a},y)$, then $\M\models\varphi(\bar{a},F(\bar{a}))$ ($F$ is a Skolem function for $\varphi$); 
\item If $\{b\in M:\M\models \varphi(\bar{a},b)\}=\{b\in M:\M\models \varphi(\bar{a}',b)\}$, then $F(\bar{a})=F(\bar{a}')$.
\end{enumerate}
\end{defn}

Obviously, definable choice implies definable Skolem functions.
It is well known, see e.g., \cite{Van1}, that any o-minimal expansion of a group has both definable Skolem functions and definable choice.
The main results of this paper are summarized in the following theorem.

\begin{thm}  \label{big}  Suppose that $\M$ is an o-minimal group and  $\M'=(\M,U)$ is an expansion by a unary predicate that is interpreted as a downward closed,
proper subset of $C\subseteq M$ for which $\sup(C)\not\in M$.\footnote{If $\sup(C)\in M$, then $\M'$ is o-minimal, so $Th(\M')$ has both definable Skolem functions and definable choice.}  Then:
\begin{enumerate}
\item  If $C$ describes a nonvaluational cut, then $Th(\M')$ does not have definable Skolem functions;
\item  If $C$ describes a valuational cut, then  $Th(\M')$ has definable Skolem functions.
\end{enumerate}
In either case, $Th(\M')$ does not have definable choice.
\end{thm}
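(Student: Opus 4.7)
My plan hinges on a definable convex subgroup $V \subseteq M$ extracted from $U$: intuitively, $V$ consists of the ``infinitesimals'' associated to the cut, measured by a first-order formula involving $U$ and the group operation. I expect $V = \{0\}$ precisely when the cut is nonvaluational, and $V$ a proper nontrivial convex subgroup of $M$ when the cut is valuational. All three conclusions of the theorem revolve around the interplay between $V$, weak o-minimality, and the underlying o-minimal $\LL$-structure.

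\textbf{Skolem functions.} For (1), I would argue by contradiction in the nonvaluational case, using the test formula $\varphi(x,y) \equiv U(y) \wedge y > x$, whose solution set is nonempty precisely for $x \in U$. A putative definable Skolem function $F$ then satisfies $F(x) \in U$ and $F(x) > x$ on $U$. The key technical lemma to establish (via the Baisalov--Poizat structure theory for $\M'$ and the nonvaluationality hypothesis $V = \{0\}$) is that $F$ agrees with some $\LL$-definable $g : M \to M$ on a cofinal subinterval of $U$. Then $g$ is $\LL$-definable, bounded above by $\sup C$, exceeds the identity on a cofinal interval, and is eventually monotone and continuous by o-minimality. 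In any $\LL$-elementary extension of $\M$ realizing the cut by an element $\alpha$, these constraints force $g(\alpha) = \alpha$ as an isolated fixed point; hence $\alpha$ is $\LL$-algebraic over the parameters of $g$, so $\alpha \in M$, contradicting $\sup C \notin M$. For (2), in the valuational case, I plan to exploit the nontrivial $V$ together with group translation to construct canonical witnesses: the solution set of any $\varphi(\bar a, y)$ decomposes into finitely many convex pieces whose endpoints are either in $M$ or lie in a coset $x + V$ with $x \in M$, and in either case a canonical $M$-witness is available via a midpoint or translation construction.

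\textbf{Failure of definable choice.} In the nonvaluational case this follows immediately from (1), since definable choice implies definable Skolem functions. For the valuational case, I would use the test formula $\varphi(x,y) \equiv y - x \in V$. The solution set for parameter $x$ is the $V$-coset $x + V$, and two parameters define the same solution set iff they are $V$-equivalent. Any definable choice function $F$ is then, by condition (2) of Definition \ref{defch}, constant on each $V$-coset with $F(x) \in x + V$, so the image of $F$ meets every $V$-coset in exactly one point. The image is $\M'$-definable, hence by weak o-minimality a finite union of convex subsets of $M$. Since $V$ is a proper nontrivial convex subgroup of the divisible group $M$, the quotient $M/V$ is a nontrivial divisible ordered abelian group, hence densely ordered. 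Density forces any convex subset of $M$ that meets each $V$-coset in at most one point to be empty or a singleton: two points in different cosets would sandwich an entire intermediate coset. The image of $F$ is therefore finite, but $F$ must hit each of infinitely many cosets --- a contradiction.

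\textbf{Main obstacle.} The heaviest lifting is in parts (1) and (2). Verifying in (1) that a hypothetical Skolem function for $\varphi$ reduces to an $\LL$-definable function on a cofinal subinterval of $U$ requires sharper structural results for definable sets in $\M'$ than the bare weak o-minimality statement; one needs nonvaluationality to rule out genuinely new definable behavior near the cut. The Skolem construction in (2) likewise demands a careful case analysis of the endpoint types of definable convex sets, leveraging $V$ and the group operation. By contrast, the failure of definable choice, once $V$ is in hand, follows from the clean density argument above in $M/V$.
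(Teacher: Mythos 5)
Your argument for the failure of definable choice is correct and is essentially the paper's: the infinitesimal subgroup $V=\{\varepsilon\in M:\varepsilon+C=C\}$ is $0$-definable, proper, nontrivial and convex, and a choice function for $y-x\in V$ would have infinite range meeting each $V$-coset exactly once, while weak o-minimality forces that range to be a finite union of convex sets, each of which (by density of $M/V$) can meet at most one coset. Your outline of part (1) also follows the paper's route: the same test formula $U(y)\wedge y>x$, reduction of a putative Skolem function to an $\LL$-definable $g$ on a set cofinal in $U$, then an o-minimality contradiction (your fixed-point endgame is a workable alternative to the paper's argument that $g(x)-x\to 0$). However, the step you label the ``key technical lemma'' is the entire content of the paper's Section 3, and bare Baisalov--Poizat does not supply it. The paper obtains it by showing that nonvaluationality makes $M$ \emph{dense} in the prime model $\N$ over a realization $b$ of the cut (Corollary~\ref{cor.dense}), so that $(\N,\M)$ is a dense pair and van den Dries' theorem on definable functions in dense pairs applies; that is exactly where nonvaluationality enters, and without naming this mechanism the reduction remains an assertion.

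The genuine gap is in part (2). It is not true in general that the convex pieces of $\varphi(\bar a,M)$ have endpoints ``in $M$ or in a coset $x+V$ with $x\in M$'': the definable cuts of $(\M,U)$ include the images of $\sup U$ and $\sup V$ under arbitrary $\LL(M)$-definable continuous functions $h$, and $h(\sup V)$ need not lie within $V$-distance of any point of $M$ (already for $h(x)=x^2$ when $\M$ carries a field structure, or for $h$ a $0$-definable function under which $U$ fails to be closed). Even where your decomposition does hold, a piece pinched between two unrealized cuts admits no ``midpoint'' in $M$ --- the midpoint of two cuts is again a cut --- so the witness construction is not definable without substantial further input. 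The paper's proof of (2) is an entirely different and much heavier argument: it normalizes $U$ to a symmetric set forming an infinite cut, shows that valuationality forces $T$-resistance (closure of $U$ under every $0$-definable continuous total function, Theorem~\ref{valeqconv}), proves quantifier elimination and universal axiomatizability for the resulting theory $T^*$ by adapting the van den Dries--Lewenberg $T$-convexity machinery (Theorem~\ref{resQE}), and then invokes the universal-axiomatizability criterion (Lemma~\ref{vantest}) to extract Skolem functions. The $T$-resistance closure condition is precisely what excludes the problematic convex pieces described above; your sketch contains no substitute for it, and this is where the real work of part (2) lies.
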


It is remarkable that the positive and negative results about Skolem functions arise from very different analyses of the definable subsets in the two cases.
The outline of the paper is as follows.  In Section~\ref{2}, we define valuational and nonvaluational cuts and prove a number of equivalent formulations
in the presence of a group.  In Section~\ref{skolemfail}, we prove Theorem~\ref{big}(1), with the major tool being the analysis of dense pairs of o-minimal structures.
In Section~\ref{tconv}, we prove Theorem~\ref{big}(2).  All of the arguments in this section borrow heavily from work of L.\ van den Dries and A.\ Lewenberg~\cite{VanLew}
where they formed the theory $T_{{\rm convex}}$ to study valuational expansions of o-minimal fields.  Here, however, we show that many of their arguments go through with only a group operation present.  Finally, in Section~\ref{faildc}, we complete the proof of Theorem~\ref{big} by  showing why we never have definable choice in any expansion of this ilk.

\section{Valuational and nonvaluational cuts}  \label{2}

In this section, we define valuational and nonvaluational cuts in ordered structures that are endowed with an ordered group operation.
Our goal is Proposition~\ref{prop.pluslike}, which enumerates a number of equivalents of this notion in the realm of expansions of o-minimal groups.

\begin{defn}A \textit{proper cut} $\langle C,D\rangle$ of a linear order $(M,<)$ is a partition of $M$ into non-empty, disjoint convex sets $C$ and $D$ such that $C<D$.
We say that a proper cut $\langle C,D\rangle$ is \textit{irrational} if  $\sup(C)$ does not exist as  an element of $M$.
\end{defn}

\begin{defn}\label{defval}Given a linearly ordered group $\M$, we say that a cut $\langle C,D\rangle$ of $\M$ is \textit{valuational} if there exists $\varepsilon\in M$ such that $\varepsilon>0$ and $C+\varepsilon = C$. The model $\M$ is \textit{valuational} if there is a definable $C$ such that $\langle C,M\setminus C\rangle$ is a valuational cut, and \textit{nonvaluational} otherwise.
\end{defn}

Note that  a weakly o-minimal structure
$\M$ is valuational if and only if it it has a definable proper nontrivial subgroup: if $\M$ has a valuational definable cut $\langle C,D\rangle$, then the the set $\{\varepsilon: \varepsilon+C=C\}$ is a definable proper nontrivial subgroup; conversely, if $\M$ has a definable proper nontrivial subgroup $G$, then $G$ must be convex (see, \textit{e.g.}, \cite{Sha}), in which case the elements bounded above by $G$ form the downward portion of a valuational cut.

\begin{ex}\label{valstruct}
Fix a nonstandard expansion ${\mathbb R^*}$ of the reals with infinite elements; let $\M_1=\left({\mathbb R}^*,+,<,U\right)$, where $U^{\M_1}$ is the convex hull of ${\mathbb R}$ in ${\mathbb R}^\ast$, and $\M_2=\left(\mathbb{Q},+,<,P\right)$, where $P^{\M_2}=\{x\in{\mathbb Q}:-\pi<x<\pi\}$. Then $\M_1$ is valuational: define $C$, the left portion of a cut, by the formula $U(x)\lor\exists y(x<y\land U(y))$, and use the fact that any two standard reals sum to a standard real. In contrast, since $\M_2$ remains archimedean, it is nonvaluational.
\end{ex}

It is shown in \cite{MacMarSte} that nonvaluational weakly o-minimal fields satisfy monotonicity and cellular decomposition properties which correspond strongly to the o-minimal versions. R.~Wencel extended their results to weakly o-minimal nonvaluational groups (see \cite{Wen2}) but we will not use any of those results here.

Now suppose that $\M$ is an o-minimal structure and $U\subseteq M$ is the downward closed half of an irrational cut of $M$.
Then there is an associated 1-type  generated by the formulas $x>a$ for all $a\in U$ and $x<b$ for all $b\in M\setminus U$.
It is a well known fact that these formulas generate  a complete type $p\in S_1(\M)$, which we dub $tp_\M(\sup U)$, the  \textit{type generated by $U$}.
There are two behaviors of such a type, which we will see precisely correspond to whether or not the cut determined by $U$ is valuational.

\begin{defn}\label{defuniqreal}Let $T$ be o-minimal, $\M\models T$, and $p\in S_1(\M)$ be the complete type generated by an irrational cut of $M$. Then $p$ is \textit{uniquely realizable} if there is an elementary extension $\M'$ of $\M$ which realizes $p$ with precisely one element. Equivalently, given $b\in{\mathfrak C}$ which realizes $p$, $p$ is realized uniquely by $b$ in every prime model over $M\cup\{b\}$. 
\end{defn}

The following result is proved by D.\ Marker in \cite{Mar1}.

\begin{thm}\label{irratcut1}  Let $\M$ be o-minimal, $p\in S_1(\M)$ the complete type generated by an irrational cut.  If $p$ is uniquely realizable, $b\in{\mathfrak C}$ realizes $p$, and $M(b)$ is prime over
$M\cup\{b\}$, 
then every element of $M(b)\setminus M$ realizes a uniquely realizable type over $M$. \end{thm}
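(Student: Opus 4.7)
The plan is to exploit the fact that in any o-minimal theory the prime model $M(b)$ over $M\cup\{b\}$ coincides with $\mathrm{dcl}(M\cup\{b\})$, so that every $c\in M(b)\setminus M$ has the form $c=f(b)$ for some $M$-definable function $f$. I would then show that $q=tp(c/M)$ is uniquely realized inside $M(b)$ by transporting the unique realizability of $p$ through a local $M$-definable inverse of $f$.

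First I would apply the Monotonicity Theorem to $f$. Since $b$ realizes an irrational cut over $M$, there is an $M$-definable open interval $I$ (with endpoints in $M\cup\{\pm\infty\}$) containing $b$ on which $f$ is continuous and either constant or strictly monotonic. If $f$ were constant on $I$, then $c=f(b)$ would lie in $M$, contradicting the choice of $c$. Hence the restriction of $f$ to $I$ is a strictly monotonic continuous bijection onto the $M$-definable set $f(I)$, and its inverse $f^{-1}:f(I)\to I$ is likewise $M$-definable. In particular $b=f^{-1}(c)\in \mathrm{dcl}(M\cup\{c\})$, so $M(b)=M(c)$ and thus $M(b)$ is prime over $M\cup\{c\}$ as well; note also that $q$ is generated by an irrational cut of $M$, since $c\notin M$.

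By the equivalent formulation of unique realizability, it then suffices to show that no element of $M(b)$ other than $c$ realizes $q$. Suppose $c'\in M(b)$ realizes $q$. Since $c\in f(I)$ and $f(I)$ is $M$-definable, $c'\in f(I)$ too, so $f^{-1}(c')$ is defined. Applying the $M$-definable function $f^{-1}$ to $c'\equiv c\pmod{M}$ yields $f^{-1}(c')\equiv f^{-1}(c)=b\pmod{M}$, so $f^{-1}(c')$ realizes $p$. Because $p$ is uniquely realized in $M(b)$, we conclude $f^{-1}(c')=b$, and hence $c'=c$. The only delicate step is securing a local $M$-definable inverse of $f$ near $b$; this is handed to us by the Monotonicity Theorem together with the observation that $c\notin M$ rules out the case where $f$ is locally constant.
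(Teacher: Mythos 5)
First, a point of comparison: the paper does not actually prove this theorem --- it is quoted from Marker's work on omitting types in o-minimal theories --- so there is no in-house argument to measure you against. Your overall strategy (use $M(b)=\text{dcl}(M\cup\{b\})$ to write $c=f(b)$ for an $M$-definable $f$, apply the Monotonicity Theorem to get a strictly monotone continuous restriction $f\upharpoonright I$ with an $M$-definable inverse, and transport unique realizability of $p$ back through $f^{-1}$) is sound and is essentially the standard argument. The closing step --- any $c'\in M(b)$ realizing $q$ lies in the $M$-definable set $f(I)$, so $f^{-1}(c')$ realizes $p$ in $M(b)$, forcing $f^{-1}(c')=b$ and hence $c'=c$ --- is correct and uses both halves of Definition~\ref{defuniqreal} appropriately. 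You should, however, say a word about why $M(b)=\text{dcl}(M\cup\{b\})$; this is standard for o-minimal theories over a set containing a model, but it is the load-bearing fact behind both ``$c=f(b)$'' and ``$M(b)$ is prime over $M\cup\{c\}$.''

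The one genuine gap is the assertion that $q=tp(c/M)$ ``is generated by an irrational cut of $M$, since $c\notin M$.'' That implication is false as stated: a nonalgebraic $1$-type over $M$ may also be the type at $\pm\infty$, or the type $a^{+}$ or $a^{-}$ of an element infinitesimally close to some $a\in M$. For such types ``uniquely realizable'' is not even defined in the paper (Definition~\ref{defuniqreal} applies only to irrational cuts), and they would typically fail unique realizability anyway --- in a group, if $c$ realizes $a^{+}$ then so does the midpoint of $a$ and $c$. Since the theorem's conclusion is precisely that $q$ is a uniquely realizable irrational-cut type, ruling out these alternatives is part of the proof, not a cosmetic remark. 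Fortunately your interval $I$ already does the work: write $I=(e_1,e_2)$ with $e_1\in C\cup\{-\infty\}$ and $e_2\in D\cup\{+\infty\}$; irrationality of $\langle C,D\rangle$ gives that $C\cap I$ is nonempty with no maximum and $D\cap I$ is nonempty with no minimum. If $f\upharpoonright I$ is, say, strictly increasing, then for any $m\in M$ with $m<c$ one finds $a''\in C\cap I$ with $m<f(a'')<c$ (using $f^{-1}$ if $m\in f(I)$, and any element of $f(C\cap I)$ otherwise), and symmetrically on the right; so the cut of $c$ over $M$ is proper and irrational. With that inserted (and the decreasing case noted), the proof is complete.
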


\begin{lem}\label{cuts} Let $\M$ be an o-minimal group, $U$ a downward closed subset of $M$ describing an irrational cut of $M$.
 Then $p=tp_\M(\sup U)$ is nonuniquely realizable if and only if the expanded structure $\M'=(\M,U)$ is valuational.
\end{lem}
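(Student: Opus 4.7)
My plan is to prove the two directions separately: the forward direction by a direct translation argument, and the reverse by the contrapositive using the monotonicity theorem.

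\textbf{Forward direction} ($\M'$ valuational $\Rightarrow$ $p$ nonuniquely realizable). Take $\varepsilon>0$ in $M$ with $U+\varepsilon=U$, and let $b$ realize $p$. I would directly verify that $b+\varepsilon$ also realizes $p$: for $a\in U$, invariance gives $a-\varepsilon\in U$, hence $a-\varepsilon<b$ and so $a<b+\varepsilon$; symmetrically, for $c\in M\setminus U$, $c-\varepsilon\in M\setminus U$ (else $c=(c-\varepsilon)+\varepsilon\in U+\varepsilon=U$), so $b<c-\varepsilon$ and $b+\varepsilon<c$. Since $\varepsilon\in M$, both $b$ and $b+\varepsilon$ lie in every prime model over $M\cup\{b\}$, and they are distinct because $\varepsilon\neq0$.

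\textbf{Reverse direction} (contrapositive). Assume $\M'$ is nonvaluational; I will show $p$ is uniquely realizable. The key preliminary observation is that nonvaluationality is equivalent to $\inf\{c-a:a\in U,\,c\in M\setminus U\}=0$: if this infimum were bounded below by some $\varepsilon>0$ in $M$, then for each $a\in U$ the element $a+\varepsilon\in M$ would be below every $c\in M\setminus U$, hence in $U$, giving $U+\varepsilon\subseteq U$ and (since $M\setminus U$ is upward closed) equality. Consequently, any two realizations $b,b_2$ of $p$ satisfy $|b-b_2|<c-a$ for all admissible $a,c$, so $|b-b_2|$ is smaller than every positive element of $M$.

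Now let $b$ realize $p$, $M(b)$ be prime over $M\cup\{b\}$, and $b_2\in M(b)$ realize $p$. By atomicity write $b_2=f(b)$ for an $M$-definable $f$, and after replacing $f$ by $f^{-1}$ if necessary assume $b_2\geq b$; suppose for contradiction that $b_2>b$. The monotonicity theorem produces an $M$-definable open interval $I\ni b$ on which $f$ is constant or strictly monotone; the constant case forces $b_2=f(x_0)\in M$ for any $x_0\in I\cap M$, which is impossible. So $f$ is strictly monotone on $I$. Define $h(x):=f(x)-x$, $M$-definable and continuous; using finiteness of its zeros together with the monotonicity theorem applied to $h$, I obtain an $M$-definable subinterval $J\ni b$ (with endpoints $a_1'',a_2''\in M$ lying in $U$ and $M\setminus U$ respectively) on which $h>0$ and $h$ is either constant or strictly monotone. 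If $h\equiv\varepsilon_0$ on $J$, then $\varepsilon_0\in M$, $\varepsilon_0>0$, and $\varepsilon_0=h(b)=b_2-b$, contradicting infinitesimality of $b_2-b$ relative to $M$. If $h$ is strictly monotone on $J$, density of $M$ in the cut (since $\sup U\notin M$) lets me pick $x\in J\cap M$ on the side of $b$ where $h(x)<h(b)=b_2-b$; then $h(x)\in M$ is positive yet less than the $M$-infinitesimal $b_2-b$, contradiction. Hence $b_2=b$, so $p$ is uniquely realizable.

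\textbf{Main obstacle.} The heart of the difficulty is in the reverse direction: keeping every parameter (the intervals $I,J$, the witness $x$) inside $M$ so that the ``$M$-infinitesimal vs.\ positive element of $M$'' contradiction can be invoked, and applying the monotonicity theorem in two stages, first to $f$ and then to $h=f-\mathrm{id}$. The leverage throughout is that in the nonvaluational regime $b_2-b$ is forced to be smaller than every positive element of $M$, which is precisely what disallows any nontrivial $M$-definable function from sending one realization of $p$ to another.
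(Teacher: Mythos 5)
Your proof is correct, and while your first direction coincides with the paper's, your second takes a genuinely different route. The implication ``valuational $\Rightarrow$ nonuniquely realizable'' is essentially the paper's argument verbatim: translate a realization $b$ by the witnessing $\varepsilon\in M$ and check $b+\varepsilon\models p$. For the other implication the paper argues directly: given realizations $a<b$ of $p$ in a prime model over $M\cup\{b\}$, it sets $\varepsilon=\frac{b-a}{2}$, asserts $\varepsilon\in M$, and translates $U$ by $\varepsilon$. You instead prove the contrapositive: nonvaluationality forces the difference of any two realizations of $p$ to be below every positive element of $M$, and then the monotonicity theorem shows no $M$-definable $f$ can carry one realization to a distinct one, by comparing $h=f-\mathrm{id}$ at points of $M$ on either side of the cut. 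The paper's route is shorter, but its key step is delicate: $\frac{b-a}{2}$ need not lie in $M$ (e.g.\ if $U$ is the set of finite elements of $M$ and $a=b/2$, then $\frac{b-a}{2}=b/4\notin M$), and what is really needed is some positive $\varepsilon\in M$ below $b-a$ --- which is exactly what your contrapositive analysis establishes, at the cost of invoking monotonicity twice. Two cosmetic points: the zero set of $h$ need not be finite (it could contain an interval), but your construction only uses that $\{x:h(x)>0\}$ is an $M$-definable finite union of points and intervals with $b$ interior to one of them, which is fine since $h(b)=b_2-b>0$; and the reduction to $b_2\geq b$ is better phrased as exchanging the roles of $b$ and $b_2$ (with $g=f^{-1}$ definable once monotonicity of $f$ on an $M$-interval is known) rather than as a preliminary replacement. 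Neither affects correctness.
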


\begin{proof}
For the forward direction, suppose $a,b\in\C$, with $a<b$ and both $a$ and $b$ realizing $p$ in some prime model over $M\cup\{b\}$. 
Because $\M$ is a divisible ordered abelian group, we may define $\varepsilon=\frac{b-a}{2}$, where $\varepsilon\in M$, and $0<\varepsilon<b-a$. 

Let $c\in U^\M$. Then $c+\varepsilon<a+\varepsilon < b$. Since $c,\varepsilon\in M$, then $c+\varepsilon\in M$; and $tp_{\M}(b)$ is generated by $p$, which implies that $c+\varepsilon\in U^{\M'}$. Thus $\M'$ is valuational.

For the converse, suppose $\M'$ is valuational. Then let $\varepsilon\in M$ with $0<\varepsilon$ be such that for any $a\in U^{\M'}$, $a+\varepsilon\in U^{\M'}$. Note that this means for any $d\in M$, if $d>U^{\M'}$, then $d-\varepsilon>U^{\M'}$ (otherwise $(d-\varepsilon)\in U^{\M'}$ and $(d-\varepsilon)+\varepsilon\not\in U^{\M'}$). Choose a realization $b$ of $p$. 

Observe that $b+\varepsilon\models p$: if not, then there is $d\in M$ such that $d>U^{\M'}$ and $b+\varepsilon>d$, so clearly $b>d-\varepsilon$. But by the above comment, $d-\varepsilon>U^{\M'}$, a contradiction. So $p$ is realized in every prime model over $M\cup\{b\}$ by $b$ and $b+\varepsilon$, and in particular $b$ is not the unique realization.
\end{proof}

\begin{cor}\label{cor.dense}
With $\M$, $U$, and $p$ as above, and $(\M, U)$ nonvaluational, let $b\in\C$ be a realization of $p$, and $\N$ be a prime model over $M\cup\{b\}$. Then $\M$ is dense in $\N$.
\end{cor}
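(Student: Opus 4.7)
The plan is to deduce density of $M$ in $N$ from two ingredients: (a) $N$ has no positive infinitesimal over $M$ (every positive element of $N$ dominates some positive element of $M$), and (b) $M$ is dense at $b$ itself; these then combine via continuity of $\M$-definable functions to give density at any point of $N$.

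For (a), by Lemma~\ref{cuts} the nonvaluationality of $(\M,U)$ is equivalent to unique realizability of $p$, so $b$ is the only realization of $p$ in $N$. Suppose for contradiction that $\varepsilon\in N$ is positive with $\varepsilon<s$ for every positive $s\in M$. I claim $b+\varepsilon$ also realizes $p$: for $d\in U^\M$, clearly $d<b<b+\varepsilon$; and for $d\in M\setminus U^\M$, the positive element $d-b$ of $N$ must be non-infinitesimal over $M$ — for if $d-b$ were infinitesimal then $(b,d)\cap M=\emptyset$, making $d$ the minimum of $\{e\in M:e>b\}$ in $M$, contrary to irrationality of the cut — so $\varepsilon<d-b$ and hence $b+\varepsilon<d$. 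Since $b+\varepsilon\in N$ and $b+\varepsilon\neq b$, this contradicts uniqueness.

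For (b), nonvaluationality directly supplies, for each positive $s\in M$, elements $c,c'\in M$ with $c<b<c'$ and $c'-c<s$; and irrationality forces, for any $e\in M$ with $e<b$ (respectively $e>b$), the interval $(e,b)$ (respectively $(b,e)$) to meet $M$, lest $e$ realize the supremum (respectively infimum) of the cut inside $M$.

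To conclude, given $y_1<y_2$ in $N$, let $y:=(y_1+y_2)/2$ and by (a) fix positive $s\in M$ with $s<y_2-y_1$. If $y\in M$, take $d:=y$. Otherwise write $y=k(b)$ for some $\M$-definable $k$, continuous and monotonic on an $\M$-definable neighborhood of $b$. Continuity of $k$ at $b$ in $\N$ yields a positive $\delta\in N$ with $|x-b|<\delta\Rightarrow|k(x)-y|<s/2$, and (a) supplies positive $\delta_0\in M$ with $\delta_0\leq\delta$. By (b), choose $x\in M\cap(b-\delta_0,b)$; then $d:=k(x)\in M$ satisfies $|d-y|<s/2$, so $y_1<d<y_2$. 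The main obstacle is step (a), where the uniqueness provided by Lemma~\ref{cuts} interacts nontrivially with the group operation and the irrationality of the cut; once (a) is established, the rest is a routine transfer via continuity of $\M$-definable functions.
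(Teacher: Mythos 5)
Your proof is correct, but it takes a genuinely different route from the paper's. The paper's argument is two lines: from unique realizability of $p$ (Lemma~\ref{cuts}) it deduces that \emph{every} irrational cut over $M$ realized in $\N$ is realized there uniquely (two realizations $d<d'$ of the same cut would translate to two realizations of $p$), so any two distinct elements of $N$ determine different cuts over $M$ and are therefore separated by a point of $M$. You instead split density into (a) absence of positive infinitesimals over $M$ in $N$ and (b) density of $M$ at $b$ itself, and then propagate (b) to an arbitrary $y\in N\setminus M$ by writing $y=k(b)$ with $k$ an $\M$-definable function and combining the monotonicity/continuity theorem with (a) to replace the continuity modulus $\delta\in N$ by some $\delta_0\in M$. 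Your step (a) is essentially the paper's translation trick specialized to the cut of an infinitesimal, and it is carried out carefully (the observation that $d-b$ cannot be infinitesimal, lest $M\setminus U$ have a minimum, is exactly right). What each approach buys: the paper's proof avoids all function-theoretic machinery but rests on a rather terse claim that $d'+(b-d)$ realizes $p$, which really leans on Theorem~\ref{irratcut1} or an exchange argument; yours is longer but every step is routine o-minimality once you know $N=\text{dcl}(M\cup\{b\})$. Two small points to tidy: when you choose $x\in M\cap(b-\delta_0,b)$ you should also shrink $\delta_0$ so that this interval lies inside the $\M$-definable interval on which $k$ is continuous and strictly monotone (immediate, since that interval has endpoints in $M\cup\{\pm\infty\}$ and contains $b$), and you should dispose of the case where $k$ is locally constant at $b$ (then $y\in M$ after all, by (b)).
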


\begin{proof} By Lemma \ref{cuts}, $p$ is uniquely realized in $\N$ by $b$. Thus any irrational cut whose complete type over $M$ is realized in $\N$ is uniquely realized in $\N$ (else there are $d<d'$ such that $tp_\C(d/M)=tp_\C(d'/M)$, whence $d+(b-d)$ and $d'+(b-d)$ both realize $p$). Thus for any $b<b'\in N$, $b$ and $b'$ must realize different cuts over $M$; as such, there is $a\in M$ such that $b<a<b'$.
\end{proof}

We note that there is nothing particularly magic about the operation of addition in defining a valuational cut.  We indicate here a class of  surrogate functions.

\begin{defn}\label{defpluslike} Let $(\M,<)$ be an ordered group, and $F:M^2\rightarrow M$ a definable function. $F$ is \textit{pluslike on $M$} if the following hold:
\begin{itemize}
\item $F$ is continuous on $M^2$;
\item For every $b\in M$, the unary function $F_b(x):=F(x,b)$ is strictly increasing; and
\item For every $a\in M$, the unary function $_a F(y):=F(a,y)$ is strictly increasing.
\end{itemize}
\end{defn}

Note that $+^\M$ itself is pluslike. Pluslike functions can be seen as capturing the topological properties of the additive structure of $\M$ without concern for the group structure. Just as valuational convex sets are defined in terms of $+^\M$, we define an \textit{$F-$valuational} set in terms of a binary pluslike function $F$.

\begin{defn} Let $\M$ be a weakly o-minimal expansion of an ordered group, $\langle C,D\rangle$ a definable cut of $\M$, and $F:\M^2\rightarrow \M$ a definable pluslike function. Then $\langle C,D\rangle$ is \textit{$F$-valuational} if there is $\varepsilon> 0$ from $M$ such that for all $a\in C$, $F(a,\varepsilon)\in C$. A definable downward-closed convex set $U\subseteq M$ is \textit{$F$-valuational} if the cut $\langle U^\M,\{x\in M: x> U\}\rangle$ is $F$-valuational.
\end{defn}

\begin{prop}\label{prop.pluslike} Let $\M$ be an o-minimal expansion of an ordered group, $U$ a predicate for a properly convex downward-closed subset, and $\M'=(\M,U)$. The following are equivalent:
\begin{itemize}
\item[(i)] $U$ is $F$-valuational for some definable pluslike function $F$.
\item[(ii)] $U$ is $F$-valuational for all definable pluslike functions $F$.
\item[(iii)] $U$ is valuational.
\item[(iv)] $p:=tp_\M(\sup U)$ is nonuniquely realizable.
\end{itemize}
\end{prop}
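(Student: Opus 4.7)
The plan is to establish a cycle of implications. By Lemma~\ref{cuts}, (iii)$\Leftrightarrow$(iv), and (ii)$\Rightarrow$(i) is immediate since $+^\M$ is itself a definable pluslike function. So it suffices to prove (i)$\Rightarrow$(iii) and (iii)$\Rightarrow$(ii).

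For (i)$\Rightarrow$(iii): I would fix a pluslike $F$ and $\varepsilon>0$ in $M$ witnessing that $U$ is $F$-valuational, and set $\phi(x):=F(x,\varepsilon)$. Then $\phi$ is $\LL$-definable, continuous, strictly increasing, with $\phi(U)\subseteq U$. The key is to study the $\LL$-definable function $\eta(a):=\phi(a)-a$; by o-minimal monotonicity it is monotonic and of constant sign on some cofinal interval $I$ of $U$. In the principal case $\eta(a)>0$ on $I$, the limit $L:=\lim_{a\to\sup U,\,a\in I}\eta(a)\in M\cup\{+\infty\}$ is strictly positive, and any $\delta\in M$ with $0<\delta<L$ satisfies $a+\delta<\phi(a)\in U$ cofinally in $U$; downward closure of $U$ then gives $U+\delta\subseteq U$, yielding (iii).

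For (iii)$\Rightarrow$(ii): Assume $U+\varepsilon\subseteq U$ and let $F$ be any pluslike function. I would introduce the set $V:=\{k\in M:U+k=U\}$, which is a nontrivial convex subgroup of $(M,+)$ containing the interval $(-\varepsilon,\varepsilon)$; hence $V$ is $\M'$-definable and open in $M$. Fix $b\in\C$ realizing $p$ and let $\N$ be prime over $M\cup\{b\}$; by the proof of Lemma~\ref{cuts}, the coset $b+V$ consists of realizations of $p$ in $\N$. Consequently, the condition $F(U,\delta)\subseteq U$ reduces to arranging that $F(b,\delta)$ does not exceed the cut $p$ in $\N$, equivalently, that $F(b,\delta)-b$ lies in the downward-closed $\M'$-definable open set $V\cup(-\infty,0]$. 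Continuity of $F$ at $(b,0)$ together with the openness of $V$ near $0$ then extracts such a $\delta>0$ in $M$ via a first-order argument in $\M'$.

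The main obstacle is the (iii)$\Rightarrow$(ii) step: securing a single $\delta$ that works uniformly for every $a\in U$. The cut-theoretic reformulation through the coset $b+V$ is what enables uniformity, collapsing a universal quantifier over $U$ into a single condition on the element $F(b,\delta)$ of $\N$, which one then arranges by continuity. A secondary subtlety appears in (i)$\Rightarrow$(iii): when $\eta(a)\leq 0$ cofinally the hypothesis is automatic by downward closure and yields no new information, so the argument must be refined by extracting valuational data from the auxiliary $\LL$-definable function $f(x):=F(x,0)$, whose deviation from the identity is constrained by the strict monotonicity of $F$ in its second variable.
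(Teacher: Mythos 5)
Your architecture differs from the paper's: the paper proves (i)$\Rightarrow$(iv)$\Rightarrow$(ii), arguing entirely with realizations of $p$ in a prime model over $M\cup\{b\}$, whereas you attempt (i)$\Rightarrow$(iii) via o-minimal monotonicity of $\eta(a)=F(a,\varepsilon)-a$ and (iii)$\Rightarrow$(ii) via continuity at $(b,0)$. The difference would be fine if your steps closed, but each has a genuine gap at exactly the point where you defer to a later ``refinement.'' In (i)$\Rightarrow$(iii), your principal case does work, with two corrections: the one-sided limit $L$ at an irrational cut need not exist in $M\cup\{+\infty\}$, so you should instead take $\delta=\eta(a_0)$ for $a_0\in I\cap U$ when $\eta$ is eventually increasing, and $\delta=\eta(d')$ for $d'\in I$ above the cut when $\eta$ is decreasing. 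But the excluded case is not a removable subtlety: if $F(a,\varepsilon)\le a$ cofinally in $U$, the hypothesis is vacuous and the implication fails outright. In $\M=({\mathbb Q},+,<)$ with $U=\{x:x<\pi\}$, the $0$-definable pluslike function $F(x,y)=(x+y)/2$ satisfies $F(a,1)<(\pi+1)/2<\pi$ for every $a\in U$, so $U$ is $F$-valuational while being nonvaluational. No argument from $F(x,0)$ can rescue this, since Definition~\ref{defpluslike} does not tie $F(x,0)$ to $x$. (The paper's proof of (i)$\Rightarrow$(iv) has the mirror-image gap: it treats only the case where $F(b,\varepsilon)$ falls above the cut, never below.)

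In (iii)$\Rightarrow$(ii), your uniformization device --- reducing ``$F(U,\delta)\subseteq U$'' to ``$F(b,\delta)$ does not exceed the cut in $\N$'' via monotonicity in the first variable --- is sound and is also the engine of the paper's argument. But extracting $\delta$ by ``continuity of $F$ at $(b,0)$'' cannot work: continuity controls $F(b,\delta)$ only relative to $F(b,0)$, which may itself already lie above the cut. Indeed the implication fails for pluslike $F$ as defined: in $\M=({\mathbb Q}^2,+,<_{\mathrm{lex}})$ with $U=\{(a,b):a<\sqrt{2}\}$ (valuational, witnessed by $(0,1)$), the pluslike function $F(x,y)=x+y+(1,0)$ satisfies $F((1,0),\delta)\notin U$ for every $\delta>0$. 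The paper's mechanism is different: it takes two realizations $a<b$ of $p$, solves $F(a,\varepsilon)=b$ for the unique $\varepsilon$ using strict monotonicity in the second variable, pushes $\varepsilon$ down to an element of $M$, and then applies monotonicity in the first variable to get $F(c,\varepsilon)<b$, hence $F(c,\varepsilon)\in U$, for all $c\in U$; even that argument silently assumes the solved-for $\varepsilon$ is positive, which the lex example violates. The upshot is that both of your hard steps (and the proposition itself) require the definition of pluslike to be strengthened, e.g.\ by demanding $F(x,0)=x$, before they can be completed; as written, your proposal does not constitute a proof.
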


\begin{proof}
(ii)$\Rightarrow$(i) is trivial, and we previously showed (iii)$\Leftrightarrow$(iv); thus it suffices to show (i)$\Rightarrow$(iv)$\Rightarrow$(ii).

(i)$\Rightarrow$(iv) is essentially the same as the forward direction in Lemma \ref{cuts}: suppose $F$ and $\varepsilon$ witness the fact that $U$ is $F$-valuational. First note that since $F$ is pluslike, it is strictly increasing in both variables; thus for a fixed $a\in M$, $_aF$ is a bijection onto some subset of $M$, and has a definable continuous partial inverse $ _aF^{-1}$. Now note that for any $d\in M$, $d>U^\M$ implies that $_dF^{-1}(\varepsilon)>U^\M$ (or else $d>U^\M$ and $_dF^{-1}(\varepsilon)\in U^\M$, so $F(_dF^{-1}(\varepsilon),\varepsilon)=d\in U^\M$, a contradiction).

Suppose $p$ is realized by $b$ in some prime model $\M_0$ over $M\cup\{b\}$. We claim that $F(b,\varepsilon)\models p$ as well. If not, then there is
$d\in M$ such that $d>U^\M$, and $F(b,\varepsilon)>d$. Thus, $b>F^{-1}_d(\varepsilon)$. But $F^{-1}_d(\varepsilon)>U^\M$, contradicting the statement in the above paragraph. So, $b\models p$ and $F(b,\varepsilon)\models p$. And because $F$ is definable, we have $F(b,\varepsilon)\in \M_0$. Since $F$ is strictly increasing, $F(b,\varepsilon)\neq b$, contradicting unique realizability of $p$.

To show (iv)$\Rightarrow$(ii), let $F:M^2\longrightarrow M$ be the pluslike function defined by
$$F(x_1,x_2)=y \text{ if and only if } \varphi(x_1,x_2,y)$$
Suppose $b\models p$, and let $a<b$ such that $a\models p$ and $a$ is an element of a prime model $\M_0$ over $M\cup\{b\}$. That $\varphi$ defines a pluslike function is a first-order property, so let $F'$ be the pluslike function defined by $\varphi^{\M_0}$. Since ${\mathcal M}\preceq \M_0$, we have $\M_0\models \forall y\forall x_1\exists !x_2\left(\varphi(x_1,x_2,y)\right)$. Let $\varepsilon$ be the unique value such that $\varphi(a,\varepsilon,b)$.

If $\varepsilon\in M$, then let $c\in U^\M$ such that $F'(c,\varepsilon)<F'(a,\varepsilon)=b$. Then $c\in M$ and $\varepsilon\in M$ implies $F(c,\varepsilon)\in M$, and thus $F(c,\varepsilon)\in U^\M$, so $U$ is $F$-valuational. And if $\varepsilon\not\in M$, then since $p$ is the type of an irrational cut, then by Theorem \ref{irratcut1}, there is $\varepsilon_0\in M$ such that $0<\varepsilon_0<\varepsilon$. We may then redo the above argument with $\varepsilon_0$.
\end{proof}

\section{Failure of Skolem functions for nonvaluational expansions}\label{skolemfail}

As a first attempt to determine whether or not a weakly o-minimal model has Skolem functions,  we describe a useful characterization that follows easily by compactness. 
Alternatively, it follows as a consequence of van den Dries' `Rigidity condition' i.e., Theorem~2.1 of \cite{Van4}.

\begin{lem}\label{vantest}  Suppose $T$ is a theory in a language with at least one constant symbol that admits quantifier elimination.
Then $T$ has definable Skolem functions if and only if $T$ has a definitional expansion $T^{df}$ that is universally axiomatizable.
\end{lem}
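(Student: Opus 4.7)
The proof of Lemma~\ref{vantest} proceeds in two directions, each built around QE combined with the characterization of universally axiomatizable theories as those preserved under substructures.

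For the ($\Leftarrow$) direction, suppose $T^{df}$ is a definitional expansion to a language $L^{df}\supseteq L$ that is universally axiomatizable. Given an $L$-formula $\varphi(\bar x,y)$ and a model $\M\models T$, let $\M^{df}$ be its unique expansion to a model of $T^{df}$. By universal axiomatizability, the $L^{df}$-substructure $\langle\bar b\rangle_{L^{df}}$ generated by $\bar b\in M^n$ together with the $L$-constants is itself a model of $T^{df}$, hence of $T$. QE lets us replace $\exists y\,\varphi(\bar x,y)$ with an equivalent quantifier-free $L$-formula, which is preserved under substructure; so $\M\models\exists y\,\varphi(\bar b,y)$ forces a witness inside $\langle\bar b\rangle_{L^{df}}$, which is necessarily an $L^{df}$-term $t(\bar b)$. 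A compactness argument yields finitely many terms $t_1,\ldots,t_k$ that suffice uniformly across all models and all $\bar b$. Since $T^{df}$ is definitional, each $t_i$ has its graph $L$-defined, producing $L$-definable functions $F_i$. Splicing them --- say $F(\bar x)=F_i(\bar x)$ for the least $i$ with $\varphi(\bar x,F_i(\bar x))$, and $F(\bar x)=c$ otherwise for a fixed constant $c$ --- yields the required definable Skolem function.

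For the ($\Rightarrow$) direction, suppose $T$ has definable Skolem functions. For each $L$-formula $\varphi(\bar x,y)$ (quantifier-free without loss, by QE), the plan is to introduce a new function symbol $f_\varphi$ with a defining axiom
\[
\forall\bar x\,\forall y\,\bigl(f_\varphi(\bar x)=y\leftrightarrow\chi_\varphi(\bar x,y)\bigr),
\]
where $\chi_\varphi$ is a parameter-free quantifier-free $L$-formula whose graph is a total function serving as a Skolem function for $\varphi$ on $\{\bar x:\exists y\,\varphi(\bar x,y)\}$ (taking value $c$ elsewhere). Granted such $\chi_\varphi$, the defining axioms are universal, and each axiom of $T$ is equivalent by QE to a universal $L^{df}$-sentence. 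Then any $L^{df}$-substructure $A$ of a model of $T^{df}$ inherits all quantifier-free $L$-sentences of $T$ and satisfies each defining axiom (since $\chi_\varphi$ is quantifier-free), so $A\models T^{df}$; by the substructure criterion, $T^{df}$ is universally axiomatizable.

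The main obstacle lies entirely in this last direction: extracting a parameter-free $\chi_\varphi$ from the hypothesis, which only guarantees Skolem functions definable \emph{with parameters}. Starting from $\chi(\bar x,y,\bar a)$ defining a Skolem function in a model $\M$, the property ``$\chi(\bar x,y,\bar z)$ defines a Skolem function for $\varphi$'' is first-order in $\bar z$ and, by QE, equivalent to a quantifier-free formula $\theta(\bar z)$ realized by $\bar a$. A further application of the definable-Skolem-function hypothesis to $\exists\bar z\,\theta(\bar z)$ supplies a canonical choice of $\bar z$, itself possibly requiring additional parameters. Making this bootstrap terminate --- ultimately grounded in the $L$-constants, which is the role of the standing assumption that $L$ contains at least one constant --- is the delicate step, and is essentially the content of van den Dries' Rigidity Condition cited in the lemma statement.
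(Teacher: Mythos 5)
Your left-to-right direction is correct and is essentially the compactness argument the paper alludes to: by universal axiomatizability the $L^{df}$-substructure generated by $\bar b$ (nonempty thanks to the constant) is a model of $T^{df}$, QE pushes $\exists y\,\varphi(\bar b,y)$ down to it, the witness is a term, and compactness extracts the finitely many terms to splice. The converse, however, has two genuine problems. The first is your verification that substructures of models of $T^{df}$ are models: quantifier elimination does \emph{not} convert the axioms of $T$ into universal sentences --- each axiom is only $T$-equivalent to a quantifier-free sentence, so a substructure ``inheriting all quantifier-free $L$-sentences of $T$'' need not satisfy $T$ (DLO already defeats this pattern of reasoning). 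What you need instead is Tarski--Vaught: an $L^{df}$-substructure $A$ of $\M^{df}$ is closed under every $f_\varphi$, so $f_\varphi(\bar b)\in A$ witnesses $\exists y\,\varphi(\bar b,y)$ for every $\bar b$ from $A$, whence $A\preceq\M$ as $L$-structures and $A\models T^{df}$. (QE is not even needed in this direction; it is only used in the other one.)

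The second, more serious problem is the parameter ``bootstrap'' you defer to at the end: it cannot terminate, because the implication is false if ``definable Skolem functions'' is read with parameters as in Definition~\ref{withparam}. Let $T=\text{Th}(\mathbb{Q},+,-,<,0)$. This theory has QE, a constant, and --- being an o-minimal group --- Skolem functions definable with parameters (take the midpoint of a bounded interval and $a\pm 1$ for unbounded ones). Yet $\text{dcl}(\emptyset)=\{0\}$, so in \emph{any} definitional expansion the substructure generated by the constants is the single point $\{0\}$, which is not a model of $T$; hence no definitional expansion of $T$ is universally axiomatizable. So there is no canonical choice of parameters to be found by any amount of iteration. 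The lemma holds (and is what the paper actually uses, e.g.\ in Corollary~\ref{resSK}, where the language carries a constant for a nonzero element of $U$) when the Skolem functions are $0$-definable; under that reading your $\chi_\varphi$ is given by hypothesis, QE makes it quantifier-free so the defining axioms are universal, and the Tarski--Vaught argument above closes the proof with no bootstrap at all.
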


This test cannot be used conveniently in the case of a nonvaluational structure.
Because ``$\M$ is nonvaluational'' is expressed by an $\forall\exists$ axiom, substructures of nonvaluational weakly o-minimal structures are not well-behaved, as illustrated by the following example.

\begin{ex}Let $\M=({\mathbb Q}^3,+,<,U)$, where $<^\M$ is lexicographic, $+^\M$ is componentwise addition in ${\mathbb Q}^3$, and $P^\M=\{\bar{x}\in{\mathbb Q}^3:\bar{x}<(1,1,\pi)\}=\{(n,p,q):n\leq 1,\,p\leq 1,\,q<\pi\}$. That $\M$ is weakly o-minimal is clear, since $({\mathbb Q}^3,+,<)\equiv ({\mathbb Q},+,<)$ and $P^\M$ is convex. $\M$ is nonvaluational, since every `small' element is of the form $(0,0,q)$ for some $q\in {\mathbb Q}$, and ${\mathbb Q}$ is itself archimedean. We note that $\M$ has substructures $\M'$ and $\M''$ such that $\M'$ is valuational, and $\M''$ is o-minimal: Let $\M'$ be the substructure generated by $+^\M$ with universe $\{(n,p,q):p=0\}$. Then $U^{\M'}=\{(n,0,p):n\leq 1\}$, which is valuational in $\M'$, witnessed by $\varepsilon=(0,0,1)$. And let $\M''$ be the substructure with universe $\{(n,p,q): q=0\}$. Then $U^{\M''}=\{(n,p,0):(n,p,0)\leq (1,1,0)\}$, which is an interval with endpoints in $\M''$; thus $\M''$ is o-minimal.
\end{ex}

To show that the obstruction alluded to above is in fact fatal,  we state what can be discerned about definable sets and functions in a nonvaluational expansion.
Corollary~\ref{cor.dense}
allows us to use the van den Dries analysis of definable functions in dense pairs which appears in \cite{Van2}. Formally, a \textit{dense pair} is a pair $(\N,\M)$ such that $\M$ and $\N$ are o-minimal ordered abelian groups with $\M\preceq\N$, and such that $\M$ is dense in $\N$.
We shall make use of the following, stated as Theorem 3, part (3) in \cite{Van2}:

\begin{thm}\label{vdddense}
Let $(\N,\M)$ be a dense pair, and let $f:M^n\rightarrow M$ be definable in $(\N,\M)$. Then there are $f_1,\ldots,f_k:M^n\rightarrow M$ definable in $\M$ such that for each $m\in M^n$, we have $f(m)=f_i(m)$ for some $i\in\{1,\ldots,k\}$.
\end{thm}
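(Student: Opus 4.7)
The plan is to reduce the claim to a structural description of $(\N,\M)$-definable subsets of $M^{n+1}$, and then apply o-minimal cell decomposition inside $\M$.

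First, I would invoke the standard normal form for formulas in a dense pair: every $(\N,\M)$-formula $\psi(\bar{x})$ is equivalent to a Boolean combination of formulas of the shape
$$\exists \bar{y}\,\bigl(\bar{y} \in M \ \wedge\ \theta(\bar{x},\bar{y})\bigr),$$
where $\theta$ lies in the pure o-minimal language of $\M$. Morally, the new unary predicate picking out $M$ contributes only existential information above the o-minimal base.

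Next, I would apply this to a formula defining the graph $\Gamma_f = \{(\bar{m},f(\bar{m})) : \bar{m} \in M^n\}$. When we restrict attention to inputs $(\bar{x},z) \in M^{n+1}$, the existential quantifier in the normal form becomes $\exists \bar{y} \in M\,\theta(\bar{x},z,\bar{y})$; since $\M \preceq \N$ and $\theta$ is a pure $\M$-formula, this statement is absolute between $\M$ and $\N$. Hence $\Gamma_f$ is cut out by an $\M$-definable condition on $(\bar{x},z)$ (any parameters from $N\setminus M$ occurring in $\theta$ can be replaced by $M$-parameters using density of $\M$ in $\N$), and so $\Gamma_f$ is $\M$-definable.

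Finally, since $\Gamma_f\subseteq M^{n+1}$ is $\M$-definable and single-valued in the last coordinate, o-minimal cell decomposition inside $\M$ writes $\Gamma_f$ as a finite disjoint union of cells $C_1,\dots,C_k$, each of which is the graph of a continuous $\M$-definable function $g_i$ on an open cell $D_i\subseteq M^n$. Extending each $g_i$ by a fixed constant off $D_i$ gives total $\M$-definable functions $f_1,\dots,f_k$ with the property that every $\bar{m}\in M^n$ satisfies $f(\bar{m})=f_i(\bar{m})$ for some $i$. The principal obstacle is the normal-form/absoluteness step: showing that subsets of $M^n$ definable in the pair coincide (up to parameters) with $\M$-definable ones is the heart of van den Dries's dense-pair analysis and rests crucially on density of $\M$ in $\N$ combined with o-minimality of $\N$; once that structural result is in hand, the cell-decomposition conclusion is routine.
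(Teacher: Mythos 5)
First, note that the paper does not prove this statement at all: it is quoted verbatim as Theorem~3, part~(3) of van den Dries's dense-pairs paper \cite{Van2}, so there is no in-paper proof to compare against. Judged on its own terms, your argument has a genuine gap at the step where you pass from the normal form to ``$\Gamma_f$ is $\M$-definable.'' The normal form gives you Boolean combinations of formulas $\exists \bar{y}\,(\bar{y}\in M \wedge \theta(\bar{x},\bar{y},\bar{d}))$ where the parameters $\bar{d}$ may lie in $N\setminus M$. Restricting $\bar{x}$ to $M^{n}$ makes the existential quantifier over $M$-tuples absolute only if $\theta$ has parameters in $M$; it does nothing about $\bar{d}$. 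And those parameters cannot be ``replaced by $M$-parameters using density'': the canonical counterexample is $U$ itself, which equals $\{m\in M: m<b\}$ for $b$ realizing the cut, hence is a trace of an $\N$-definable set on $M$, yet is emphatically not $\M$-definable --- that is the entire premise of the paper (cf.\ Lemma~\ref{nonvaldense}). Density lets you approximate $b$ by elements of $M$, but $x<b$ is not equivalent on $M$ to $x<b'$ for any $b'\in M$.

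Concretely, your intermediate claim is false for functions too: with $e\in M$, $e>0$, the map $f:M\to M$ given by $f(x)=x$ for $x\in U$ and $f(x)=x+e$ otherwise is definable in the pair, satisfies the theorem with $k=2$, but has a graph that is not $\M$-definable. If your argument were correct, the theorem's conclusion could be strengthened to ``$f$ is $\M$-definable,'' which would make the ``for some $i$'' clause vacuous and would also make the subsequent step in the proof of Theorem~\ref{nonvalnosk} (using weak o-minimality of $\M'$ to find a convex set on which $f=f_i$) pointless --- the set where $f$ agrees with $f_i$ is in general only pair-definable. The actual proof in \cite{Van2} runs through a definable-closure analysis: one shows that for $\bar{a}\in M^n$ the value $f(\bar{a})$, though defined using parameters from $N$, already lies in $\mathrm{dcl}_{\M}(\bar{a}\bar{c})$ for a fixed finite tuple $\bar{c}$ from $M$ (this is the nontrivial content, resting on the back-and-forth system for dense pairs), and then compactness produces the finitely many $\M$-definable $f_1,\dots,f_k$. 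Your final cell-decomposition step is fine as far as it goes, but it is applied to a set that you have not shown, and cannot show, to be $\M$-definable.
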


Finally, we note that working in a weakly o-minimal structure obtained by adding a nonvaluational convex predicate to an o-minimal structure is essentially the same as working in a dense pair. The following lemma makes this notion explicit:

\begin{lem}\label{nonvaldense}
Let $\M$ be o-minimal, $U$ be a downward-closed nonvaluational properly convex subset, and $\M'=(\M,U)$. Let $\N$ be a prime model over $M\cup\{b\}$, where $b$ realizes $tp_\M(\sup U)$. Then for any $X\subseteq M$ definable in $\M'$, there is a formula $\varphi_X(\bar{x},y)$ such that $X=\varphi_X(\N^n,b)\cap \M^n$.
\end{lem}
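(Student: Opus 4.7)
My plan is to proceed by induction on the complexity of the $\LL\cup\{U\}$-formula $\psi(\bar x)$ defining $X$, taking $\varphi_X(\bar x,y)$ to be the $\LL$-formula obtained from $\psi$ by replacing each atomic subformula $U(t)$ by $t<y$. The atomic case works because $b$ realizes $tp_{\M}(\sup U)$: for every $\LL$-term $t$ and every $\bar a\in M^n$, $t(\bar a)\in U$ iff $t(\bar a)<b$ in $\N$. Since $\M\preceq\N$, ordinary $\LL$-atoms transfer directly, and Boolean combinations cause no difficulty.

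The subtle step is the quantifier case, and this is where I expect the main obstacle. Given $\psi=\exists z\,\psi_0(z,\bar x)$ with inductive translation $\varphi_0(z,\bar x,y)$, the naive candidate $\exists z\,\varphi_0(z,\bar x,y)$ is too permissive because in $\N$ the quantifier ranges over all of $N$, not just $M$. A concrete failure is $\psi_0(z)=\forall w\,(U(w)\leftrightarrow w<z)$: in $\M'$ this defines $\emptyset$ since no $M$-element realizes the irrational cut, but the translation pins down $\{b\}$ as its unique solution set in $\N$. So I will need a more delicate choice of $\varphi_X$ in the quantifier step.

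To correct this I will combine the o-minimality of $\N$ with the density of $\M$ in $\N$ (Corollary \ref{cor.dense}). The set $Y_0(\bar a):=\{c\in N:\N\models\varphi_0(c,\bar a,b)\}$ decomposes into finitely many convex pieces, and density automatically furnishes $M$-witnesses from any nondegenerate interval. The remaining threat is isolated points of $Y_0(\bar a)$ lying in $N\setminus M$. Here the nonvaluational hypothesis enters through Theorem \ref{irratcut1}: every element of $N\setminus M$ realizes a uniquely realizable type over $\M$, so an $\LL$-definable isolated point of $Y_0(\bar a)$ is of the form $f(\bar a,b)$ for some $\LL$-definable $f$, and it lies in $M$ precisely when $f$ is locally constant in its last argument near $b$. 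This local-stability condition is itself $\LL$-expressible (using an auxiliary $\exists\delta>0\,\forall y'$ clause guarding $\varphi_0(z,\bar x,y')$). Splicing a disjunction -- ``$Y_0(\bar a)$ contains a nondegenerate interval'' OR ``$Y_0(\bar a)$ contains an isolated point that is locally stable in $y$ near $b$'' -- into the quantifier step produces the desired $\varphi_X$ and closes the induction.
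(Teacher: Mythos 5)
Your instinct that the quantifier step is the crux is well taken, but you should know that the paper's own proof is exactly the one-sentence substitution you describe in your first paragraph and nothing more: it does not address the issue you raise, and your counterexample $\exists z\,\forall w\,(U(w)\leftrightarrow w<z)$ is a legitimate objection to the literal reading of the statement, in which $\varphi_X(\N^n,b)$ is evaluated with quantifiers ranging over all of $N$. Two observations reconcile this. First, for the only application of the lemma (in Theorem~\ref{nonvalnosk}) all that is needed is that $X$ be definable in the \emph{pair} $(\N,\M)$, whose language contains a predicate for $M$; there one relativizes every quantifier of $\psi$ to $M$ while substituting $t<y$ for $U(t)$, and your quantifier problem evaporates --- this is evidently what the authors intend, and the subsequent appeal to Theorem~\ref{vdddense} requires only pair-definability. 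Second, the stronger conclusion as literally written (that $X$ is the trace on $M^n$ of an $\N$-definable set) is still true, but it is van den Dries' induced-structure theorem for dense pairs in \cite{Van2}, not a consequence of bare substitution. Your proposed induction is in effect a hands-on proof of that theorem for this particular pair, and the outline is sound: nondegenerate interval components of $Y_0(\bar a)$ are handled by density (Corollary~\ref{cor.dense}), and an isolated point, being of the form $f(\bar a,b)$ for an $\M$-definable $f$, lies in $M$ precisely when $f$ is locally constant in $y$ near $b$ (if $f$ is strictly monotone near $b$ then $f(\bar a,b)\in M$ would force $b\in M$). You would still need to make this uniform --- bound the number of isolated points via o-minimality and express ``the $i$-th isolated point is locally stable'' as a definable case split --- and you do not actually need Theorem~\ref{irratcut1} for this, only primeness of $\N$ over $M\cup\{b\}$. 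So your route is correct in outline but substantially heavier than what the paper does or needs; either citing the induced-structure theorem or weakening the conclusion to pair-definability with relativized quantifiers closes the argument much faster.
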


\begin{proof}
Given the formula $\psi(x)$ which defines $X$ in $\M'$, replace all instances of $U(x)$ in $\psi$ with $x<y$.
\end{proof}

\begin{thm}\label{nonvalnosk}
Let $\M$ be an o-minimal structure with a group operation, $U$ be a downward-closed convex subset of $M$, and $\M'=(\M,U)$. If $\M'$ has a definable, nonvaluational cut which is not definable in $\M$, then $\M'$ does not have definable Skolem functions.
\end{thm}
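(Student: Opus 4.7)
The plan is to argue by contradiction: suppose $\M'$ has definable Skolem functions, and derive a contradiction by transferring a well-chosen Skolem function into a dense pair and then using the fixed-point structure of $\M$-definable functions. Let $V$ denote the nonvaluational definable cut of the hypothesis, defined in $\M'$ by some $L\cup\{U\}$-formula $\chi_V(y,\bar{c})$. The key preliminary observation is that the existence of such a $V$ forces $U$ itself to be nonvaluational (in particular, for Theorem~\ref{big}(1), where $V=U$, this is automatic); this lets us invoke Lemma~\ref{cuts} to see that $p := tp_\M(\sup U)$ is uniquely realizable, and Corollary~\ref{cor.dense} to fix $b\in\C$ realizing $p$ in a prime model $\N$ over $M\cup\{b\}$ with $\M$ dense in $\N$.

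Assume $F\colon M\to M$ is a definable Skolem function for $\varphi(x,y) := \chi_V(y,\bar{c})\wedge y>x$. Then for $x\in M$ with $x<\sup V$ we have $F(x)\in V$ and $F(x)>x$. By Lemma~\ref{nonvaldense}, $F$ (viewed as a subset of $M\times M$) is defined by an $L$-formula with parameter $b$, so $F$ is a function definable in the dense pair $(\N,\M)$. Theorem~\ref{vdddense} then produces $\M$-definable $f_1,\dots,f_k\colon M\to M$ with $F(x)\in\{f_1(x),\dots,f_k(x)\}$ for every $x\in M$. By weak o-minimality of $\M'$, each set $\{x\in V : F(x)=f_i(x)\}$ is a finite union of convex pieces; since these sets cover $V$ and $\sup V\notin M$, one of them must be cofinal in $V$, so there exist an index $i$ and some $a\in M$ with $F(x)=f_i(x)$ for every $x\in M\cap(a,\sup V)$.

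The final step is a fixed-point contradiction using the $\M$-definability of $f_i$. Let $d$ realize $q := tp_\M(\sup V)$ in an elementary extension; by Lemma~\ref{cuts} applied to $V$, $q$ is uniquely realizable. The defining formula interprets $f_i$ as a continuous monotone function on a neighborhood of $d$ (by the o-minimal monotonicity theorem), and taking the left limit at $d$ of the inequalities $x<f_i(x)<d$ (valid for $x\in M\cap(a,\sup V)$) forces $f_i(d)=d$. The $\M$-definable set $E := \{x : f_i(x)=x\}$ is a finite union of points and intervals with endpoints in $M\cup\{\pm\infty\}$, and $d\in E$. If $d$ lies in the interior of an interval of $E$, then $f_i$ is the identity on that interval, yielding $M$-points $x$ near $\sup V$ with $f_i(x)=x$, contradicting $F(x)>x$. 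Otherwise $d$ is an isolated point or an endpoint of $E$, hence definable from $M$-parameters, contradicting $d\notin M$. The main obstacle I anticipate is the initial identification of $F$ as a function definable in the dense pair via Lemma~\ref{nonvaldense} and Theorem~\ref{vdddense}; once $F$ is written as a piecewise $\M$-definable function, the remaining argument is essentially routine o-minimality.
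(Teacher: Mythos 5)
Your proposal tracks the paper's proof for its entire first half: the paper likewise fixes a Skolem function $f$ with $a<f(a)\in U$ for all $a\in U$, transfers its graph into the dense pair $(\N,\M)$ via Lemma~\ref{nonvaldense}, applies Theorem~\ref{vdddense} to write $f$ as one of finitely many $\M$-definable functions, and uses weak o-minimality to get agreement with a single $f_i$ on a set cofinal in the cut. Where you genuinely diverge is the closing contradiction. The paper stays inside $M$: it shrinks to an interval $I=(c,d)$ straddling the cut, sets $g(x)=f_i(x)-x$, arranges $g$ to be positive and strictly decreasing, and uses the $\varepsilon$-characterization of nonvaluationality (for every $\varepsilon>0$ there are $a\in U$ and $b>U$ with $b-a\le\varepsilon$) to force the infimum of $g$ over the cut to be $0$, contradicting monotonicity. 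You instead pass to a realization $d$ of $tp_\M(\sup V)$, squeeze $x<f_i(x)<d$ to get $f_i(d)=d$, and then exploit the o-minimal structure of the fixed-point set $E$: either $d$ is an isolated point or endpoint of $E$, hence lies in the definable closure of $M$-parameters (impossible for an irrational cut), or $E$ contains an interval around $d$ that meets $M\cap V$, contradicting $F(x)>x$. Both endings are correct; yours trades the explicit gap-function computation for unique realizability (Lemma~\ref{cuts}) together with the fact that cell boundaries lie in the definable closure of the parameters, and it avoids having to normalize $g$ to be monotone.

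Two points deserve more care. First, your squeeze argument tacitly needs $M\cap V$ to be cofinal below $d$ in the prime model over $M\cup\{d\}$; this is precisely where unique realizability of $q$ is used and should be made explicit, since in the valuational case the left limit along $M\cap V$ would not compute $f_i(d)$. Second, you assert without proof that the existence of a nonvaluational, non-$\M$-definable cut $V$ forces $U$ itself to be nonvaluational; this is needed before you may invoke Corollary~\ref{cor.dense} and Lemma~\ref{nonvaldense}, and while it is true, it is not immediate (it essentially rests on the quantifier elimination of Section~\ref{tconv}). The paper sidesteps the issue by implicitly treating only the case $V=U$, which is the only case used elsewhere, so your proposal is no less rigorous than the published argument on this point, but the claim should either be proved or the theorem read in the restricted form.
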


\begin{proof}
The proof will analyze $p=tp_\M(\sup U)$. 
We proceed by showing there is
no $\M'$-definable function $f:M\rightarrow M$ such that for every $a\in U^{\M'}$, we have $a<f(a)$ and $f(a)\in U$.

Let $f:M\rightarrow M$ be such a function, and assume $f$ is $\M'$-definable. Consider $\Gamma(f)\subseteq M^2$, the graph of $f$ on $M$.  Let $b\in\C$ be a realization of $p$, and $\N$ be a prime model over $M\cup\{b\}$. Then by Lemma \ref{nonvaldense}, there is an $L$-formula $\varphi(x,y,\bar{z})$, and a tuple $\bar{c}$ from $N$ such that $\{(x,y):\N\models\varphi(x,y,\bar{c})\}\cap M^2=\Gamma(f)$. Note that the solution set of $\varphi(x,y,\bar{c})$ in $\N$ need not be the graph of a function, but the formula defines $\Gamma(f)$ in the pair $(\N,\M)$. Hence, by Theorem \ref{vdddense}, there are $\M$-definable functions $f_1,\ldots, f_k$ such that for each $a\in U$, $f(a)=f_i(a)$ for some $i\leq k$. Each of these functions $f_i$ is also definable in $\M'$, so by weak o-minimality, there is a convex set $I$, which is unbounded in $U$, and $i\leq k$  such that $f\upharpoonright I=f_i$. Restricting the domain of $f$, we may assume $f=f_i$.

The main point of the dense pair argument was to get this function $f$ to be definable in $\M$. Thus by monotonicity for o-minimal structures, we may assume $I$ is an open \textit{interval}. Now $I$ is unbounded in $U^{\M'}$ and $tp_\M(\sup U)$ is omitted in $\M$; thus, $I$ must be $(c,d)$ for some $c,d\in M$, $c\in U^{\M'}$ and $d>U^{\M'}$.

By the hypotheses on $f$, we may shrink the domain further to assume $f$ is strictly increasing on $I$, and for all $a\in U$, $a<f(a)$ and $f(a)\in U$. Let $g(x):=f(x)-x$. Then we may also assume by shrinking that $g$ is strictly decreasing and positive on $I$. 

\pretolerance=100000
Again by the hypotheses on $f$, we can say in a certain sense that $\ds{\lim_{x\rightarrow\sup U} g(x)=0}$: since $\M'$ is nonvaluational, for any $\varepsilon>0$, there is $a\in U^{\M'}$ and $b>U^{\M'}$ such that $b-a\leq\varepsilon$. Then $a<f(a)<b$ implies that $f(a)-a<b-a$, and thus $g(a)=f(a)-a<\varepsilon$.

But since $I$ is an interval with right endpoint $d\in M$, $d>U^{\M'}$, there is $d'\in I\cap U^{\M'}$. Let $\varepsilon_0=f(d')-d'=g(d')$. Then by the above, there is $a'\in U^{\M'}$ such that $g(a')<\varepsilon_0$. $g$ was strictly decreasing on $I$, so we must have $g(d')<\varepsilon_0$, an impossibility.
\end{proof}

The authors note that recent, independent work by Eleftheriou, Hasson, and Keren (see \cite{Ker} and \cite{Elef}), has generalized the above result, showing that if $\M^*$ is any proper expansion of $\M$ by non-valuational cuts, then $\M^*$ does not have definable Skolem functions. 

\section{$T$-resistance and Skolem functions for valuational expansions}\label{tconv}

To explore Skolem functions in the valuational case, we first consider the specialization to \textit{$T$-resistant} structures, an analogue of the $T$-convex structures discussed in \cite{VanLew} and \cite{Van3}. 
In those works, the authors were motivated by a desire to generalize the theory of real-closed valued fields (RCVF), which is a key example of a $T$-convex theory. 
For that reason, the authors restrict their study to that of o-minimal fields; however, here we  only assume that $T$ expands a group. 
Our proof of Theorem \ref{resQE}, below, is is based on the proof of Theorem 3.3 and Corollary 3.13 of \cite{VanLew}.

\begin{defn}\label{defTres}Let $\M$ be an o-minimal expansion of a group with complete theory $T$, and $U$ be a proper, convex subgroup of $M$. We say that the model $(\M,U)$ is \textit{$T$-resistant}, if the following conditions hold:
\begin{enumerate}
\item The language of $\M$ contains at least one constant symbol $c$ interpreted by a non-zero element of $U$. 
\item $U$ is closed under every $0$-definable continuous total function with domain $M$. 
\end{enumerate}
\end{defn}

The following results, as well as the proof structure and the supporting lemmas, are closely patterned after similar results for $T$-convex theories in \cite{VanLew} and \cite{Van3}. 
Suppose the $L$-structure $\G$ is an o-minimal expansion of a group, let $T=Th(\G)$,  and suppose that $U\subseteq\G$ is a proper, $T$-resistant subgroup.  Let 
$L^*=L\cup\{U,c\}$
and let $$T^*=Th(\G)\cup \hbox{`$U$ is $T$-resistant'}\cup \hbox{`$c>U$'}$$

\begin{thm}\label{resQE} 

Suppose $\G$ is an o-minimal expansion of a group, such that $T=Th(\G)$ has quantifier elimination, and is universally axiomatizable. Then the $L^*$-theory $T^*$ defined above
admits elimination of quantifiers and is universally axiomatizable.
\end{thm}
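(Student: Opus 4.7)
The plan is to adapt van den Dries and Lewenberg's strategy for $T_{\mathrm{convex}}$ from \cite{VanLew} to this purely group-theoretic setting.

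I would begin with universal axiomatizability, the easier half. The base $T$ is universal by hypothesis. The axioms saying ``$U$ is a convex subgroup containing $0$'' are all of the form $\forall\ldots$: closure under $+$ and $-$, plus convexity as $\forall x\,\forall y\,(U(y)\wedge -y\le x\le y\to U(x))$. The axiom $\neg U(c)$ is atomic. For $T$-resistance, QE for $T$ ensures that every $0$-definable continuous total function $f\colon M\to M$ has graph defined by a quantifier-free $L$-formula $\varphi_f(x,y)$, so that $\forall x\,\forall y\,(U(x)\wedge \varphi_f(x,y)\to U(y))$ is universal. Running this schema over all such $\varphi_f$ produces a universal axiomatization of $T^*$.

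For QE I would invoke the substructure-embedding test: given $(\M_1,U_1),(\M_2,U_2)\models T^*$ with $(\M_2,U_2)$ sufficiently saturated and a common $L^*$-substructure $(\mathcal{A},U_\mathcal{A})$, produce an $L^*$-embedding $(\M_1,U_1)\to(\M_2,U_2)$ fixing $\mathcal{A}$. By Zorn this reduces to a one-point extension: given a partial $L^*$-embedding $j$ defined on an $L^*$-substructure $\mathcal{A}_0\subseteq\M_1$ and any $b\in M_1\setminus A_0$, find $b'\in M_2$ so that $j$ extends to an $L^*$-embedding on the $L$-substructure $\mathcal{A}_0\langle b\rangle$ generated by $A_0\cup\{b\}$, sending $b\mapsto b'$. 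Since $T$ has QE and is universal, $\mathcal{A}_0$ and $j(\mathcal{A}_0)$ are automatically $L$-elementary in $\M_1,\M_2$ respectively.

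By $L$-QE and saturation of $\M_2$, the only $L$-constraint on $b'$ is that it realize $\mathrm{tp}^L(b/A_0)$. The substantive requirement is that for every $f(b)\in \mathcal{A}_0\langle b\rangle$, with $f$ an $A_0$-definable continuous total function, $f(b)\in U_1$ iff $f(b')\in U_2$. O-minimal monotonicity decomposes each such $f$ into finitely many monotone continuous pieces over $A_0$; combined with $T$-resistance, this shows that the $L^*$-type of $b$ over $A_0$ is pinned down by $\mathrm{tp}^L(b/A_0)$ together with a single cut condition locating $b$ within the $U_1$-cut over $A_0$. Since $U_1\cap A_0=U_2\cap j(A_0)$, the corresponding cut condition over $j(A_0)$ with respect to $U_2$ is consistent, and saturation then yields the required $b'$.

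The hard part will be when $b$ itself realizes the cut $\langle U_1,M_1\setminus U_1\rangle$ over $A_0$. There, different monotone pieces of an $A_0$-definable $f$ can straddle the $U$-boundary, and one must track piece by piece which images fall inside $U$. $T$-resistance handles the parameter-free case directly, but for $A_0$-definable $f$ one must combine it with o-minimal cell decomposition to reduce to $0$-definable functions shifted by elements of $A_0$. This bookkeeping---mirroring Theorem~3.3 of \cite{VanLew}---is where the real work lies.
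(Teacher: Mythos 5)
Your overall strategy is the same as the paper's (both follow van den Dries--Lewenberg): universal axiomatizability is immediate from the form of the axioms, and quantifier elimination is obtained from an embedding test reduced to one-point extensions. But the proposal stops exactly at the point where the mathematical content lies, and explicitly defers it (``this bookkeeping \dots is where the real work lies''). The missing step is your assertion that the $L^*$-type of $b$ over $A_0$ is pinned down by $\mathrm{tp}^L(b/A_0)$ together with a single bit recording which side of the $U$-cut $b$ lies on. That is not bookkeeping; it is the crux, and it is precisely what the paper's Lemma~\ref{mainlemma} and Corollary~\ref{maincor} establish: for $a$ with $|U|<a<|G\setminus U|$ there is \emph{exactly one} $T$-resistant subgroup of $\G\langle a\rangle$ containing $a$ and compatible with $(\G,U)$, and exactly one omitting $a$. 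Without this uniqueness, knowing that $b'$ realizes $\mathrm{tp}^L(b/A_0)$ and lies on the correct side of the cut does not tell you that $f(b')\in U_2$ iff $f(b)\in U_1$ for every $A_0$-definable $f$: a priori the predicate could extend to the generated substructure in more than one compatible way. The paper's uniqueness argument is genuinely nontrivial --- from a hypothetical second compatible extension it manufactures an infinite strictly increasing chain of $T$-resistant subgroups $U_a\subsetneq U_1\subsetneq U_2\subsetneq\cdots$ and derives a contradiction with the finite rank of the prime model over a finite tuple, via Lemma 2.13 of \cite{VanLew}. Nothing in your sketch supplies or replaces that argument.

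A second, smaller omission: your one-point-extension scheme runs over an arbitrary common $L^*$-substructure $\mathcal{A}_0$, but the ``$L$-type plus cut condition'' analysis only makes sense once $(\mathcal{A}_0,U_1\cap A_0)$ is itself a model of $T^*$, and it need not be --- in particular $U_1\cap A_0$ may equal all of $A_0$, so that the predicate is not proper in the substructure. The paper handles this as step (1) of its Robinson--Shoenfield test (the $T^*$-closure): in that degenerate case one must first adjoin a new element above $A_0$ and take the convex hull before any extension of embeddings can begin. So as written your proposal is a correct plan with the right architecture, but with the normalization step missing and the central uniqueness lemma left unproved.
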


\begin{proof}
As axioms asserting that `$U$ is $T$-resistant' and `$c>U$' are visibly universal, the fact that $T^*$ is universally axiomatizable is automatic.
For the quantifier elimination, our proof follows, with only minor changes,  the proof of Theorem 3.10 from \cite{VanLew}. As in that article, we use a variant on the Robinson-Shoenfield substructure-completeness test for quantifier elimination of $T^*$. We provide a series of lemmas constituting a sketch of the argument. In many cases, the proofs are identical to corresponding items from \cite{VanLew}; in those cases, we refer the reader to the article for further details.

\begin{lem}\label{lem3.4} To show that a model $(\G,U)$ is $T$-resistant, it suffices to show that $U$ is closed under all functions $f$ which:
\begin{itemize}
\item are $0$-definable in $\G$, strictly increasing, continuous, even, 
\item and which satisfy $\ds{\lim_{x\to+\infty} f(x)=+\infty}$. 
\end{itemize}
\end{lem}
This is proved as Lemma 3.4 from \cite{VanLew}, and functions satisfying the conditions of the lemma are referred to as \textit{$T$-functions}. 

\begin{lem}\label{lem3.5} Let $f$ be a $T$-function on $\G$ and $U$ a $T$-resistant subgroup of $\G$. Adopt the notation $|A|=\{|a|:a\in A\}$. Then $f(|U|)=|U|$, and $f(|G\setminus U|)=|G\setminus U|$.
\end{lem}

The proof of Lemma \ref{lem3.5} is identical to that of Lemma 3.5 from \cite{VanLew}. The main lemma which follows is adapted from Lemma 3.6 of \cite{VanLew}. 

\begin{lem}\label{mainlemma}
Let $U$ be a $T$-resistant subgroup of $\G\preceq {\mathcal H}$, and $a\in H$ such that $|U|<a<|G\setminus U|$. There is exactly one $T$-resistant subgroup $U_a$ of $\G\langle a\rangle$ (the prime model of $T$ over $G\cup\{a\}$, which in an o-minimal field corresponds to the definable closure of $a$ over $G$) containing $a$ such that $(\G,U)\subseteq (\G\langle a\rangle,U_a)$, namely:
$$U_a=\left\{x\in\G\langle a\rangle:|x|<|\G\setminus U|\right\}$$
\end{lem}
\textit{Proof of main lemma:} First, we show that $U_a$ satisfies the conclusion of the lemma. Define $U_a$ in terms of $a$ as
\begin{align*}
U_a:= & \{x\in\G\langle a\rangle:  |x|\leq f(\bar{u},a)\text{ for some 0-definable }\\& \text{  continuous function }f:{\mathcal H}^{m+1}\to {\mathcal H},\,\bar{u}\in U^m\}
\end{align*}

That $U_a$ is convex is clear from the definition. To see that $U_a$ is in fact $T$-resistant, choose  $b\in U_a^{>0}$ with $|b|\leq f(\bar{u},a)$, and any $T$-function $g$ on $\G\langle a\rangle$.  Because $g$ is strictly increasing in absolute value, we have $|b|<|g(b)|$; since $g$ is 0-definable, then so is $g\circ f$, giving $|b|<g\circ f(\bar{u},a)$. By (\ref{lem3.4} - \ref{lem3.5}) above, $U_a$ is $T$-resistant.

Note also that $U_a$ is the minimal $T$-resistant subset of $\G\langle a\rangle$ containing $a$, since any $T$-resistant subgroup $U$ of $\G\langle a\rangle$ must be closed under all 0-definable functions $f:U^n\to U$. 

We claim that $(\G,U)$ is a submodel of $(\G\langle a\rangle,U_a)$. This can be reduced to showing that $U_a\cap G = U$. 

First, we know that there is an expansion $(\G^*,U^*)\succeq(\G,U)$ such that $U^*\cap G=U$. To wit, define $p(x)$ to be the type
$$\{x>b:b\in U\}\cup\{x<b:b>U\},$$
and let $\G^*$ be an elementary extension $p$ with an element $a^*\in\C$. If $\G^*=\G\langle a^*\rangle$ and $U^*=U^{\G^*}$, we have $U^*\cap G=U$, and $U<a^*<|G\setminus U|$. Because $a$ and $a^*$ have the same type over $\G$, we may assume $a^*=a$, and $G^*=G\langle a\rangle$. Then $U_a\subseteq U^*$ by minimality of $U_a$, so $U_a\cap G\subseteq U^*\cap G = U$; thus, $U_a\cap \G=U$.

For uniqueness, we rely upon a technical result, which appears as Lemma 2.13 in \cite{VanLew}:

\begin{lem}\label{2.13}
Let $U$ be a $T$-resistant subgroup of $\G$ containing $\G'\preceq \G$, and define $INF_{\G'}:=\{x\in G:|x|<\varepsilon$ for each $\varepsilon$ in $(\G')^{>0}\}$, the set of elements of $G$ which are infinitesimal relative to $G'$.

Then, if  $a\in U$ with $a\not\in \G'+INF_{\G'}$, we have $\G'\langle a\rangle\subseteq U$.
\end{lem}

Given the technical lemma, suppose that there is $U^*\supsetneq U_a$ with $a\in U^*$ and $(\G,U)\subseteq (G\langle a\rangle,U^*)$, \textit{i.e.}, $U^*\cap G=U$. Then, because $U^*$ properly extends $U$, there is $a_1\in U^*\setminus U_a$ with $|U_a|<a_1<|G\setminus U|$; in particular, $|U|<a_1<|G\setminus U|$. Since tp$_{\G}(a)=$tp$_\G(a_1)$, there is an automorphism $h$ of $G\langle a\rangle$ fixing $G$ pointwise such that $h(a)=a_1$. Write $a_1=t(\bar{g},a)$ for some term $t$ of $L^{df}$ (which consists of $L$ together with all 0-definable functions of $\G$), and a tuple $\bar{g}$ from $G$. Applying $h$ successively, define $a_{i+1} = h^{i+1}(a) = t(\bar{g},a_i)$. For each $n$, define:
\begin{align*}
U_n:= & \{x\in\G\langle a\rangle:|x|\leq f(\bar{u},a_n)\text{ for some 0-definable} \\ &\text{ continuous function }f:{\mathcal H}^{m+1}\to {\mathcal H},\,\bar{u}\in U^m\}
\end{align*}

Let ${\mathcal P}\langle \bar{g},a\rangle$ be the prime model of $T$ over the tuple $\bar{g}a$. Then for all $n$, $a_n$ is an element of ${\mathcal P}\langle \bar{g},a\rangle$. 

Our construction yields that $a<a_1<a_2<\cdots$ and $U_a\subsetneq U_1\subsetneq U_2\cdots$; and all $U_i$ are $T$-resistant subgroups of $\G\langle a\rangle$.

By Lemma \ref{2.13}, each $(U_n\cap{\mathcal P}\langle \bar{g},a\rangle)$ must also be a $T$-resistant subgroup $\G_n$ of ${\mathcal P}\langle \bar{g},a\rangle$, and $\G_n
\precneq
\G_{n+1}$, with $\G_n\subseteq U_n\cap {\mathcal P}\langle \bar{g},a\rangle$ and $a_{n+1}\in G_{n+1}\setminus G_n$. This chain is impossible, since ${\mathcal P}\langle \bar{g},a\rangle$ was assumed to be prime and thus have finite rank (where rank here refers to the number of generators of ${\mathcal P}\langle \bar{g},a\rangle$ over the prime model ${\mathcal P}\langle \emptyset\rangle$). This concludes the proof of Lemma \ref{mainlemma}.

\begin{cor}\label{maincor} With $T$, $\G$, $U$, and $a$ as in the statement of the main lemma, the set 
$$U_* := \{x\in \G\langle a\rangle : |x|\leq u\text{ for some }u\in U\}$$ 
is the unique $T$-resistant subgroup of $\G\langle a\rangle$ such that $(\G,U)\subseteq (\G\langle a\rangle, U_*)$ and $U_*$ does not contain $a$. 
\end{cor}

With this in mind, given a $T$-resistant subgroup $U$ of $\G\preceq {\mathcal H}$ and an element $a\in {\mathcal H}$ with $|U|<a<|G\setminus H|$, there are two $T$-resistant subgroups $W$ of $\G\langle a \rangle$, such that $(\G,U)\preceq (\G\langle a\rangle, W)$: one for which $a\in W$ (by Lemma \ref{mainlemma}), and one for which $a\not\in W$ (by Corollary \ref{maincor}). 

For the proof of Theorem \ref{resQE}, it suffices to show, using the Robinson-Shoenfield test for quantifier elimination, that:
\begin{enumerate}
\item For any substructure $(\G,U)$ of a model of $T^*$, there is a $T^*$-closure $(\widetilde{\G},\widetilde{U})$, such that:
\begin{itemize}
\item $(\G,U)\subseteq (\widetilde{\G},\widetilde{U})\models T^*$, and
\item $(\widetilde{G},\widetilde{U})$ can be embedded over $(\G,U)$ into every model of $T^*$ extending $(\G,U)$. 
\end{itemize}
\item If $(\G,U)\subseteq (\G_1,U_1)$ are models of $T^*$ with $\G\neq \G_1$, there is $a\in G_1\setminus G$ such that $(\G\langle a\rangle,U_1\cap G\langle a\rangle) $ can be embedded over $(\G,U)$ into some elementary extension of $(\G,U)$.
\end{enumerate}

To prove (1), suppose that $(\G,U)$ is a submodel of a model $(\G^*,U^*)$ of $T^*$. Then $\G\subseteq \G^*\models T$. Since $T$ is universally axiomatizable, this implies that $\G\models T$. Additionally, $U\subseteq U^*$, which is a $T$-resistant subgroup of $\G$. If $U^*$ is closed under all $0$-definable continuous functions $f$, then so is $U$. Thus, there are two cases to consider:

\textit{Case 1:} $U\neq G$. In this case, $(\G,U)$ is $T$-resistant, and is its own $T^*$-closure.

\textit{Case 2:} $U=G$. In this case, take an elementary extension ${\mathcal H}\succeq\G$ with a new infinite element $a\in H$ and $G<a$. Define $W$ to be the convex hull of $\G$ in $\G\langle a\rangle $. Then $(\G\langle a\rangle,W)$ is a $T^*$-closure of $(\G,U)$. 

For the proof of (2), let $(\G,U) \subseteq (\G_1,U_1)$ be $T$-resistant  with $\G\neq \G_1$. We proceed by finding a value $a\in G_1\setminus G$ such that $(\G\langle a\rangle,U_1\cap G\langle a\rangle )$ is embeddable over $(\G,U)$ into an elementary extension of $(\G,U)$. 

\textit{Case 1:} $\G_1$ contains an element $a$ with $|U|<a<|G\setminus U|$. 

If $a\in U_1$, then choose an elementary extension $(\G^*,U^*)$ of $(\G,U)$, and an element $b\in U^*$ such that $|U|<b<|G\setminus U|$. Then $a$ and $b$ have the same type over $\G$, so there is a $\G$-isomorphism $h:\G\langle a\rangle\to \G\langle b\rangle\subseteq \G^*$ taking $a$ to $b$. By the Lemma \ref{mainlemma}, $h$ maps $U_1\cap G\langle a\rangle$ to $U^*\cap G\langle b\rangle$, thus $a$ and $h$ satisfy condition (2) of the Robinson-Shoenfield test.

If $a\not\in U_1$, let $b$ be in an elementary extension $(\G^*,U^*)$ of $(\G,U)$ such that $|U|<b<|G\setminus U|$, and such that $b\not\in U^*$. Then $b$, together with $h$, defined as above, satisfy condition (2).

\textit{Case 2:} $\G_1$ does not contain $a$ such that $|U|<a<| G\setminus U|$. If this is the case, then $U_1$ is the convex hull of $U$ in $\G_1$. Now, by the model-completeness of $T$, $\G_1$ is an elementary extension of $\G$ and there is an elementary extension $(\G^*,U^*)$ of $(\G,U)$ such that $\G_1$ is embedded in $\G^*$ over $\G$ by some map $i:\G_1\to \G^*$.  In this case, we have that $U^*$ is the convex hull of $U$ in $i(\G_1)$; thus, $i(U_1)=U^*\cap i(\G_1)$. Thus $i$ embeds $(\G_1,U_1)$ into $(\G^*,U^*)$ over $(\G,U)$. 
\end{proof}

\begin{cor}\label{resSK} Let $\M\models T$ be an o-minimal $L$-structure, with $U\subseteq M$ such that $(\M,U)$ is $T$-resistant.  Given $c\in M\setminus U^{\M}$, the $L(U,c)$-theory of $(\M,U,c)$ has definable Skolem functions.
\end{cor}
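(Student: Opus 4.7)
The plan is to combine Theorem~\ref{resQE} with Lemma~\ref{vantest}, routing the argument through a definitional expansion that supplies the quantifier elimination and universal axiomatization needed to invoke Theorem~\ref{resQE}. Since $\M$ is an o-minimal expansion of a group, $T := Th(\M)$ has definable Skolem functions. After Morleyizing $T$ to obtain quantifier elimination (which preserves o-minimality and the Skolem functions), the forward direction of Lemma~\ref{vantest}---applicable because the group identity $0$ provides a constant symbol---furnishes a definitional expansion $T^{df}$, in some enlarged language $L^{df} \supseteq L$, that is universally axiomatizable. A definitional expansion of a theory with quantifier elimination retains quantifier elimination, so $T^{df}$ meets the hypotheses of Theorem~\ref{resQE}.

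Next I would verify that $(\M,U)$ is $T^{df}$-resistant. This is the step I expect to require the most care, since the new Skolem functions named in $L^{df}$ need not be continuous. The key observation, however, is that a definitional expansion introduces no new $0$-definable functions on $\M$, only new names for existing ones. In particular, the class of $0$-definable continuous total functions $M \to M$ is literally the same for $T$ and $T^{df}$, so $T^{df}$-resistance of $U$ coincides with the hypothesized $T$-resistance. Theorem~\ref{resQE} then yields a theory $T^{df,*}$ in the language $L^{df,*} := L^{df} \cup \{U,c\}$ that has quantifier elimination and is universally axiomatizable; the complete extension $Th(\M,U,c)$ viewed in $L^{df,*}$ inherits both properties.

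The backward direction of Lemma~\ref{vantest}, applied with the trivial (identity) definitional expansion, now shows that any theory with quantifier elimination that is universally axiomatizable in a language with a constant has definable Skolem functions. Applied to the $L^{df,*}$-theory of $(\M,U,c)$, this produces Skolem functions definable in $L^{df,*}$. Because every symbol of $L^{df} \setminus L$ is interpreted by an $L$-definable function on $\M$, every $L^{df,*}$-definable function on $(\M,U,c)$ is in fact $L(U,c)$-definable. Hence these Skolem functions are already $L(U,c)$-definable, giving the desired definable Skolem functions for $Th(\M,U,c)$.
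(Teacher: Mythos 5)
Your proposal is correct and follows essentially the same route as the paper's proof: use Lemma~\ref{vantest} to pass to a universally axiomatizable definitional expansion $T^{df}$, apply Theorem~\ref{resQE} to it, invoke Lemma~\ref{vantest} again, and pull the Skolem functions back through the definitional expansion. You additionally spell out two details the paper leaves implicit (arranging quantifier elimination before applying Lemma~\ref{vantest}, and checking that $T$-resistance is unaffected by the definitional expansion since no new $0$-definable functions are introduced), but the underlying argument is the same.
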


\begin{proof}  As $\M$ is an o-minimal expansion of a group, $Th(\M)$ has definable Skolem functions.  Thus, by Lemma~\ref{vantest}, $Th(\M)$ has a definitional expansion
$T^{df}$ that is universally axiomatizable.  Now apply Theorem~\ref{resQE} with respect to $T^{df}$ to obtain the universally axiomatizable, quantifier eliminable $T^*$.
Thus, by Lemma~\ref{vantest} again, $T^*$ has definable Skolem functions.  As having definable Skolem functions is invariant under definitional expansions, 
the $L(U,c)$-theory of $(M,U,c)$ also has definable Skolem functions.
\end{proof}

It is clear that, in the context of an o-minimal structure $\M$ expanding a group with a new convex subset $U$, if $\M'=(\M,U)$ is $T$-resistant, then $\M'$ is valuational; as such Corollary \ref{resSK} is a partial converse to Theorem \ref{nonvalnosk}. The remainder of this subsection is devoted to proving Theorem \ref{valeqconv}, which extends this converse to show that a structure which fails to be $T$-resistant in a particular way must in fact be nonvaluational.

\begin{defn}
For an o-minimal structure $\M$ and a convex $U\subset M$, we say that $U$ forms an \textit{infinite cut} of  if $\text{dcl}_\M(\emptyset)\subseteq U$.
\end{defn}

Note that if $\M$ does not define a positive element, as may be the case if the language of $\M$ does not contain multiplication, it is possible to have an infinite cut whose supremum is a finite real number.

\begin{thm}\label{valeqconv} Let $\M$ be an o-minimal expansion of a group with a definable positive element, and let $U\subset M$ be a properly convex symmetric subset such that the expanded structure $(\M,U)$ forms an infinite cut. If $(\M,U)$ is not $T$-resistant, then $(\M,U)$ is nonvaluational.
\end{thm}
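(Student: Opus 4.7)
We prove the contrapositive: assume $(\M,U)$ is valuational, and deduce $(\M,U)$ is $T$-resistant. The constant-in-$U$ clause of Definition~\ref{defTres} is automatic from the existence of a $0$-definable positive element together with the infinite-cut hypothesis $\mathrm{dcl}(\emptyset) \subseteq U$. The substance of the argument is to verify that $U$ is closed under every $0$-definable continuous total function $M \to M$; the convex-subgroup condition will then follow as a corollary.

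By Lemma~\ref{lem3.4}, it suffices to establish closure under $T$-functions. Fix such an $f$. Because $f$ is even, the naive binary map $(x,y) \mapsto f(x)+y$ is not pluslike, so we work instead with the odd shift
\[
\tilde f(x) \;:=\; \operatorname{sign}(x) \cdot \bigl(f(|x|) - f(0)\bigr),
\]
a $0$-definable, odd, continuous, strictly increasing function on $M$ with $\tilde f(0)=0$ and unbounded in both directions. The binary function $F(x,y) := \tilde f(x) + f(0) + y$ is then pluslike on $M^2$, and one computes $F(a,y) = f(a) + y$ for every $a \geq 0$. Applying Proposition~\ref{prop.pluslike} to the downward closure $U^\downarrow := \{x \in M : x < \sup U\}$ (which shares its cut with $U$ and is therefore valuational), we obtain $\delta > 0$ in $M$ such that $F(a,\delta) \in U^\downarrow$ for every $a \in U^\downarrow$. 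Specializing to $a \in U^{\geq 0}$ gives $f(a) + \delta < \sup U$, and hence $f(a) < \sup U$. The lower bound $f(a) \geq f(0) > -\sup U$ --- valid because $f$ is even with global minimum $f(0) \in \mathrm{dcl}(\emptyset) \subseteq U$ --- then yields $f(a) \in U$. Evenness of $f$ and symmetry of $U$ extend this to $f(U) \subseteq U$.

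The convex-subgroup condition drops out by applying the above to the $T$-function $g(x) := 2|x|$: closure under $g$ gives $2|a| \in U$ for every $a \in U$. For arbitrary $a,b \in U$, the triangle inequality then yields $|a+b| \leq |a|+|b| \leq 2\max(|a|,|b|) < \sup U$, so $a+b \in U$. Combined with symmetry, this shows $U$ is a proper convex subgroup, completing the verification of $T$-resistance.

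The principal technical obstacle is designing the pluslike function $F$: the naive $f(x) + y$ fails because $f$ is even, and the odd-shift-plus-translation combination $\tilde f(x) + f(0) + y$ is the trick that simultaneously achieves pluslikeness on $M^2$ and recovers $f(a) + y$ on the positive half. Once $F$ is available, the closure under $f$ is a direct application of Proposition~\ref{prop.pluslike}, and the subgroup property falls out via the special case $g(x) = 2|x|$.
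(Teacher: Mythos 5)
Your proof is correct, and it reaches the theorem by a genuinely different decomposition than the paper's, although both arguments pivot on Proposition~\ref{prop.pluslike}. The paper argues the implication directly: from a witness $G$ to the failure of $T$-resistance it manufactures, via a piecewise-reflection argument (the Claim), a strictly increasing continuous $H$ that still escapes $U$, and then shows that the single pluslike function $F(x,y)=H(x+y)$ is not valuational. You prove the contrapositive: assuming valuationality, you reduce to $T$-functions via Lemma~\ref{lem3.4}, encode each even $T$-function $f$ into the pluslike function $\tilde f(x)+f(0)+y$ by way of the odd shift, and read closure of $U$ under $f$ off of $F$-valuationality; the subgroup condition then falls out of the special case $g(x)=2|x|$, a point the paper never addresses explicitly. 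What your route buys is that it avoids the paper's Claim entirely (its most laborious step); what it costs is the appeal to Lemma~\ref{lem3.4}, whose proof in \cite{VanLew} presupposes that $U$ is already a convex subgroup, so you should establish the subgroup property (which your argument derives independently of that lemma) \emph{before} invoking it --- a reordering your write-up supports but does not make explicit. Two shared informalities, which I do not count against you because the paper commits their mirror images: you pass from ``$(\M,U)$ is valuational'' to ``the cut of $U^\downarrow$ is valuational'' even though Definition~\ref{defval} quantifies over all definable cuts, and you treat clause (1) of Definition~\ref{defTres} as satisfied by a $0$-definable element rather than a named constant.
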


\begin{proof} We begin by showing that if there is some definable function which contradicts $T$-resistance, then there is a counterexample of a special form.

\begin{claim}Suppose that $U$ forms an infinite cut of $\M$, and $(\M,U)$ is not $T$-resistant. Then there is $H:M\rightarrow M$ such that
 \begin{itemize}
 \item $H$ is 0-definable in $\M$, continuous, total, and strictly increasing, and
 \item There are $\alpha,\beta\in U$ such that $H(\alpha)\in U$ and $H(\beta)>U$.
 \end{itemize}
\end{claim}

\begin{proof}[Proof of Claim] Suppose $(\M,U)$ is not $T$-resistant, witnessed by the $0$-definable total continuous function $G:M\rightarrow M$, such that $U$ is not closed under $G$.
Note the condition that $U$ be an infinite cut is necessary: if there a 0-definable element $c\in M\setminus U$, then the constant function $G(x)=c$ is a counterexample to $T$-resistance.
However, because we require that $U$ is an infinite cut, there is no definable element of $M\setminus U$; thus the range of $G$ has a nonempty intersection with $U$ (witnessed, at a minimum, by $G(0)\in U$).

By monotonicity in $\M$, there are disjoint open intervals $I_0,I_1,\ldots ,I_n$, such that $I_i <I_{i+1}$ for each $i$, $M\setminus \bigcup_{i=1}^n I_i$ is finite, and $G\upharpoonright I_i$ is strictly monotone for each $i$. Because $\M$ is o-minimal, $\inf I_i$ and $\sup I_i$ must lie inside $U$, as must $G(\inf I_i)$ and $G(\sup I_i)$. Thus we may assume that the image under $G$ of at least one of $I_0$ or $I_n$ has nonempty intersection with $M\setminus U$. We assume without loss of generality that $G(I_n)\cap (M\setminus U)$ is nonempty (if not, consider $-G(-x)$); further we may assume that $G(I_n)\cap (M\setminus U) > U$ (else we consider $-G(x)$). $G\upharpoonright I_n$ cannot be constant, so it must be strictly increasing. Choose $i$ largest so that $G\upharpoonright I_i$ is strictly decreasing or constant, and $G\upharpoonright I_{i+1}$ is strictly increasing, and define $G^\ast$ as follows:
$$G^*(x)=\left\{\begin{array}{ll}
G(x) & x< I_i\\
-G(x) + 2 G(\inf I_i) & x\in I_i\\
G(x) + 2G(\inf I_i)-2G(\inf I_{i+1})& x>I_i
\end{array}
\right.$$
Proceed reductively on the next largest $i$ at which the function changes behavior. Since $\inf I_i=\sup I_{i+1}$, this process preserves continuity, and at each stage, the modified function is strictly increasing from $I_i$ to positive infinity. After repeating the adjustment finitely many times, the resultant function $G^*$ is strictly increasing on all of $\M$ except for possibly $I_0$, on which $G$ may take a constant value from $U$. If this is the case, define $G^{**}$ as
$$G^{**}(x)=\left\{\begin{array}{ll}
-G^*(x) + 2G(\sup I_0) & x\in I_0\\
G^*(x) & \text{otherwise}
\end{array}
\right.$$
Then $G^{**}(x)$ satisfies the requirements for $H$ stated in the claim.
\end{proof}

Now suppose $H$ is 0-definable in $\M$, continuous, total, and strictly increasing, and let $\alpha, \beta\in U$ with $H(\alpha)\in U$ and $H(\beta)>U$. By Proposition \ref{prop.pluslike}, it suffices to find a definable pluslike function $F:M^2\longrightarrow M$ such that the cut defined by the downward closure of $U$ is \textit{not} $F$-valuational.
Define
$$F(x,y):=H(x+y)$$
That $F$ is pluslike is immediate. To see that the cut is not $F$-valuational, choose any $\varepsilon>0$. Then $\beta\in U$, but we have $F(\beta, \varepsilon)>H(\beta)>U$.
\end{proof}

We are finally ready to prove the difficult direction of Theorem~\ref{big}.

\begin{thm}  \label{big2}
Suppose that $\M$ is an o-minimal group and  $\M'=(\M,U)$ is an expansion by a unary predicate that is interpreted as a downward closed, valuational  $C\subseteq M$.
Then  $Th(\M')$ has definable Skolem functions.
\end{thm}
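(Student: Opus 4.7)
The plan is to reduce to Corollary~\ref{resSK} by extracting a $T$-resistant subgroup from $U$ and then lifting the resulting Skolem functions to $\M' = (\M, U)$.

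I would first set $V := \{g \in M : g + U = U\}$, the stabilizer of $U$ under translation. By the remarks following Definition~\ref{defval}, $V$ is a definable proper convex subgroup of $\M$, nontrivial precisely because $U$ is valuational. As a convex subgroup, $V$ is automatically properly convex and symmetric; moreover, its downward closure is translation-invariant under any positive element of $V$, so $(\M,V)$ is itself valuational. I would then verify that $V$ forms an infinite cut, i.e.\ $\mathrm{dcl}^{\M}(\emptyset) \subseteq V$ (any $\emptyset$-definable positive element must be absorbed into $V$ by the valuationality of $U$, else the stabilizer $V$ could be enlarged); the contrapositive of Theorem~\ref{valeqconv} will then give that $(\M,V)$ is $T$-resistant.

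Next, I would pick any $c \in M \setminus V$; by Corollary~\ref{resSK}, the theory of $(\M,V,c)$ has definable Skolem functions.

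The main obstacle will be lifting these Skolem functions from $(\M,V,c)$ to $\M'$. Since $U$ is a downward-closed union of cosets of $V$, there are two subcases. If the image of $U$ in $M/V$ has a maximum coset $d+V$, then $U = \{x \in M : x \leq d+v \text{ for some } v \in V\}$ is parameter-definable in $(\M,V,d)$, and the Skolem functions transfer immediately. The harder subcase is when the image of $U$ in $M/V$ is an irrational cut of $M/V$; here $U$ is not parameter-definable in $(\M,V,c)$ for any finite set of parameters. To handle this, I would adapt the Robinson--Shoenfield argument from the proof of Theorem~\ref{resQE} to the enlarged language $L \cup \{V,U,c\}$, establishing analogues of Lemma~\ref{mainlemma} and Corollary~\ref{maincor} for $U$: the extension of $U$ into any one-element extension $\G\langle a\rangle$ should be forced by the downward-closure data of $a$ over $U$ (namely, whether $a \in U$, and where the coset $a + V$ sits relative to those in $U$). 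Once quantifier elimination and universal axiomatizability are verified for the theory of $(\M,V,U,c)$, Lemma~\ref{vantest} will yield definable Skolem functions, and these will restrict to definable Skolem functions for $Th(\M')$.
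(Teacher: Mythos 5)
Your overall skeleton (reduce to Theorem~\ref{valeqconv} and Corollary~\ref{resSK}) matches the paper's, but the set you feed into that machinery is the wrong one, and this creates two genuine gaps. First, the stabilizer $V=\{g\in M: g+U=U\}$ need not contain $\text{dcl}_\M(\emptyset)$, so it need not form an infinite cut. For example, in the o-minimal group $({\mathbb R}^*,+,<,1)$ take $U=\{x: x-N \hbox{ is negative or infinitesimal}\}$ for an infinite $N$; this $U$ is downward closed and valuational, its stabilizer $V$ is exactly the set of infinitesimals, and the $0$-definable element $1$ lies above $V$. In that situation $(\M,V)$ is definitely \emph{not} $T$-resistant (the constant function with value $1$ violates clause~(2) of Definition~\ref{defTres}), so Theorem~\ref{valeqconv} cannot produce $T$-resistance for you. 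Your justification --- that a $\emptyset$-definable positive element ``must be absorbed into $V$, else the stabilizer could be enlarged'' --- is not an argument: the stabilizer is whatever convex subgroup it is, and nothing forces definable elements into it. Second, even where the first step goes through, the ``lifting'' step is the actual content of the theorem and you have only gestured at it: when the image of $U$ in $M/V$ is an irrational cut, $U$ is not definable in $(\M,V,c)$, and your proposal to redo the Robinson--Shoenfield analysis in the language $L\cup\{V,U,c\}$ amounts to a new quantifier-elimination theorem whose main lemma (the analogue of Lemma~\ref{mainlemma} controlling how \emph{both} predicates extend to $\G\langle a\rangle$) you have neither stated nor proved.

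The paper avoids both problems by never touching the stabilizer. It passes to an elementarily equivalent model containing $e^*>\text{dcl}(\emptyset)$, normalizes so that $0\in U$, replaces $U$ by $U+e^*$ (which now contains $\text{dcl}(\emptyset)$ and hence forms an infinite cut), and then symmetrizes to $W=\{a: a,-a\in U+e^*\}$. This $W$ is symmetric, properly convex, forms an infinite cut, and is still valuational, so Theorem~\ref{valeqconv} applies directly to give $T$-resistance. Crucially, $W$ determines the same upper cut as $U$ shifted by the parameter $e^*$, so $(\M,U)$ and $(\M,W)$ are bi-definable and there is nothing to lift: definable Skolem functions for $Th(\M,W,c)$ immediately yield them for $Th(\M,U)$, since Definition~\ref{withparam} permits parameters. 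If you want to salvage your approach, replace the stabilizer by this translate-and-symmetrization of $U$ itself.
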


\begin{proof}  First, by compactness there is always an elementary extension $\M^*=(M^*,U^*)$ of $\M'$ where $M^*$ has a positive element 
$e^*>\text{dcl}_{\M^*}(\emptyset)=\text{dcl}_{\M'}(\emptyset)$.
As $\M^*$ and $\M'$ have the same theory, we may assume that $\M^*=\M'$ and hence $e^*\in M$.  
Next, as `having definable Skolem functions' is invariant under definable expansions, we can freely adjust $U$.
In that vein, we may freely assume that $0\in U$.  If this were not the case, then consider 
$$U'=\{a\in M:-a\not\in U\}$$
Then as $(M,U)$ and $(M,U')$ are bi-definable, we could choose $(M,U')$ in place of $(M,U)$.
So, assume that $0\in U$.  
Next, by replacing $U$ by $U+e^*$ if needed, then we may assume that $U$ describes an infinite cut.
Finally, we can make $U$ symmetric by replacing $U$ by 
$$V=\{a\in M:\hbox{both $a,-a\in U$}\}$$
Thus, we have reduced to the case where we can apply Theorem~\ref{valeqconv} to $(M,U)$ to conclude that $(M,U)$ is $T$-resistant.
So, by Theorem~\ref{resQE}, $Th(M,U,c)$ has definable Skolem functions, where the interpretation of $c$ is any element $a\in M$ with $a>U$.
But, as Definition~\ref{withparam} allows formulas $F_\varphi$ with parameters from the model, it follows that
$Th(M,U)$ also has definable Skolem functions.
\end{proof}

\section{Failure of definable choice}  \label{faildc}
In this brief section we complete the proof of Theorem~\ref{big} by proving that no expansion $(\M,U)$ of an o-minimal group by a downward closed subset of $M^1$ can satisfy definable choice, unless of course,
the interpretation $U^{\M'}$ were already $\M$-definable.

\begin{thm}  Suppose that $\M$ is an o-minimal group and $C\subseteq M$ is downward closed and not $\M$-definable.  Then the  weakly o-minimal expansion
$\M'=(\M,U)$ where $U^{\M'}$ is interpreted as $C$ does not satisfy definable choice.
\end{thm}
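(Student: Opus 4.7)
The plan is to split into cases based on whether the cut defined by $C$ is valuational. The nonvaluational case is immediate from Theorem~\ref{nonvalnosk}: since $C$ is not $\M$-definable and defines a nonvaluational cut not present in $\M$, the structure $\M'$ lacks definable Skolem functions, and therefore cannot have definable choice.

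For the valuational case, I would work with the stabilizer $G := \{s \in M : C + s = C\}$, which is $\M'$-definable by the formula $\forall z\,(U(z) \leftrightarrow U(z-s))$. Valuationality supplies a positive element of $G$, while properness of $C$ forces $G \neq M$, so $G$ is a nontrivial proper definable subgroup of $M$. By the remarks following Definition~\ref{defval}, $G$ is convex, and since $M$ is divisible, the ordered quotient $M/G$ is a nontrivial divisible (hence densely ordered, infinite) ordered group. I would then consider the formula $\varphi(x,y) := \forall z\,(U(z) \leftrightarrow U(z - (y-x)))$, whose solution set at parameter $a$ is exactly the coset $a + G$, and whose solution sets at $a,a'$ coincide iff $a - a' \in G$.

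Assuming for contradiction that an $\M'$-definable choice function $F$ for $\varphi$ exists, conditions (1) and (2) of Definition~\ref{defch} force $F(a) \in a + G$ for every $a$, and force $F$ to be constant on $G$-cosets. Hence $D := \{F(a) : a \in M\}$ is an $\M'$-definable set meeting every $G$-coset in exactly one point. By weak o-minimality of $\M'$, $D = D_1 \cup \cdots \cup D_k$ is a finite union of convex sets, and the crucial step is to show each $D_i$ is a singleton. If $y_1 < y_2$ both lie in some $D_i$, then $[y_1, y_2] \subseteq D$; should $y_1, y_2$ belong to the same $G$-coset, convexity of that coset places the whole interval inside it, forcing $D$ to meet it infinitely; should they lie in distinct cosets, density of $M/G$ yields an intermediate coset $b + G$ which, by convexity of $G$, sits entirely inside $(y_1, y_2) \subseteq D$, again giving infinitely many representatives. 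Either case contradicts the one-representative-per-coset property, so $D$ is finite, contradicting $|M/G| = \infty$. The main technical hurdle is the intermediate-coset step: verifying that a coset strictly between $y_1 + G$ and $y_2 + G$ in the quotient order is contained in the $M$-interval $(y_1, y_2)$; this follows from unpacking how the order on $M/G$ is induced from $M$ via convexity of $G$.
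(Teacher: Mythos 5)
Your proposal is correct and follows essentially the same route as the paper: the same case split, with the nonvaluational case handled by Theorem~\ref{nonvalnosk} and the valuational case handled by taking the convex stabilizer subgroup $I=\{\varepsilon: \varepsilon+C=C\}$, the coset equivalence relation, and deriving a contradiction with weak o-minimality from the range of a putative choice function. Your argument merely spells out in more detail why that range (an infinite set meeting each convex coset exactly once, hence discrete) cannot be a finite union of convex sets, which the paper leaves as a one-line observation.
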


\begin{proof}  As $C$ is not $\M$-definable, $\langle C,(M\setminus C)\rangle$ is an irrational cut of $M$.  We split into cases depending on whether or not the cut
is valuational or nonvaluational.  If the cut is nonvaluational, then by Theorem~\ref{nonvalnosk} $Th(\M')$ does not have definable Skolem functions, hence it cannot satisfy definable
choice.  Thus, we may assume that the cut is valuational.  But then, directly from the definition of a valuational cut, the infinitesimal set
$$I=\{\varepsilon\in M:\varepsilon +C=C\}$$
is a proper, 0-definable convex subgroup of $M$.  This subgroup $I$ induces a 0-definable  equivalence relation $\sim_I$ on $M$, defined by
$a\sim_I b$ if and only if $a-b\in I$.  Let $\varphi(x,y)$ denote the 0-definable formula $x-y\in I$, and assume by way of contradiction that there were a definable $F$ for 
$\varphi(x,y)$ as in Definition~\ref{defch}.  But then, as the $\sim_I$-classes are convex, the set $R=$range$(F)$ would be an infinite, discrete subset of $M^1$, directly contradicting the fact that $\M'$ is weakly o-minimal.
\end{proof}

\section{Future directions} There are several opportunities for incremental advancement in our understanding of Skolem functions for models $(\M,U)$, where $\M$ is o-minimal. First, in the valuational case, is to find an explicit algorithm for calculating Skolem functions corresponding to a formula. In \cite{Sha}, Shaw gave an explicit algorithm for calculating Skolem functions in the case of a model $(\M,U)$ that satisfies a condition called $T$-immunity. However, $T$-immunity is strictly stronger than $T$-resistance, and in particular is not satisfied if multiplication is definable in $\M$. A second question is whether the counterexample in \S\ref{skolemfail} is essentially the only obstacle to definable Skolem functions: 
Given the nonvaluational structure $(\M,U)$, could we add a unary function $f$ to the language which satisfies $x<f(x)\land U(x)$ whenever $U(x)$ holds, such that the resulting structure $(\M,U,f)$ has definable Skolem functions?

\bibliographystyle{asl}
\bibliography{dcv93r}
\end{document}